          \string\usetikzlibrary{decorations.markings} to use arrows with markings}{}}{}%
\DeclareMathOperator{\A}{\mathbb{A}}
\DeclareMathOperator{\Z}{\mathbb{Z}}
\DeclareMathOperator{\R}{\mathbb{R}}
\DeclareMathOperator{\G}{\mathbb{G}}
\renewcommand{\P}{\mathbb{P}}
\renewcommand{\S}{\mathbb{S}}
\DeclareMathOperator{\Man}{\textnormal{\textbf{Man}}}
\DeclareMathOperator{\op}{\textnormal{op}}
\DeclareMathOperator{\Spc}{\textnormal{\textbf{Spc}}}
\DeclareMathOperator{\SH}{\mathcal{SH}}
\DeclareMathOperator{\Sm}{\textnormal{Sm}_S}
\DeclareMathOperator{\Spt}{\textnormal{\textbf{Spt}}} 
\DeclareMathOperator{\Cor}{\textnormal{\textbf{Cor}}}
\DeclareMathOperator{\Corf}{\Cor^{\textnormal{fét}}}
\DeclareMathOperator{\Shv}{\textnormal{Shv}}
\DeclareMathOperator{\Nis}{\textnormal{Nis}}
\DeclareMathOperator{\Spec}{\textnormal{Spec}}
\DeclareMathOperator{\Map}{\textnormal{Map}}
\DeclareMathOperator{\id}{\textnormal{id}}
\DeclareMathOperator{\Hom}{\textnormal{Hom}}
\DeclareMathOperator{\KGL}{\textnormal{KGL}}
\DeclareMathOperator{\from}{\colon}
\DeclareMathOperator{\der}{\textnormal{der}}
\DeclareMathOperator{\fr}{\textnormal{fr}}
\DeclareMathOperator{\fsyn}{\textnormal{fsyn}}
\DeclareMathOperator{\veff}{\textnormal{veff}}
\DeclareMathOperator{\gp}{\textnormal{gp}}
\declaretheoremstyle[headfont   = \bfseries\sffamily,
                     notefont   = \normalfont,
                     bodyfont   = \itshape,
                     spaceabove = 6pt,
                     spacebelow = 6pt]{plain}
\declaretheoremstyle[headfont   = \bfseries\sffamily,
                     notefont   = \normalfont,
                     spaceabove = 6pt,
                     spacebelow = 6pt]{definition}
\declaretheorem[style = plain, numberwithin = section]{theorem}
\declaretheorem[style = plain,      sibling = theorem]{corollary}
\declaretheorem[style = plain,      sibling = theorem]{proposition}
\declaretheorem[style = plain,      sibling = theorem]{construction}
\declaretheorem[style = definition, sibling = theorem]{definition}
\declaretheorem[style = definition, sibling = theorem]{example}
\declaretheorem[style = remark,     sibling = theorem]{remark}
\declaretheorem[style = plain,     sibling = theorem]{question}
\numberwithin{equation}{section}
\crefname{question}{Question}{Questions}
\title{Introduction to Framed Correspondences}
\author{Marc Hoyois and Nikolai Opdan}
\date{}
\begin{document}
\maketitle

\begin{abstract}
    We give an overview of the theory of framed correspondences in motivic homotopy theory. Motivic spaces with framed transfers are the analogue in motivic homotopy theory of \(E_{\infty}\)-spaces in classical homotopy theory, and in particular they provide an algebraic description of infinite \(\P^1\)-loop spaces. We will discuss the foundations of the theory (following Voevodsky, Garkusha, Panin, Ananyevskiy, and Neshitov), some applications such as the computations of the infinite loop spaces of the motivic sphere and of algebraic cobordism (following Elmanto, Hoyois, Khan, Sosnilo, and Yakerson), and some open problems.\footnote{This text is based on a series of three lectures given by the first author in August 2020 during the Motivic Geometry program at the Centre for Advanced Study at the Norwegian Academy of Science and Letters in Oslo, Norway.}
\end{abstract}

\tableofcontents

\section{Introduction}
Voevodsky introduced in an unpublished note (\cite{Framed}) framed correspondences to get a more computation-friendly model for motivic homotopy theory. In \cref{sec:Cobordism} we will see how this can be used in practice to get results for algebraic cobordism. First, we introduce framed correspondences as a tool to answer the following two fundamental questions.

\begin{question}\label{question1}
What kind of transfers do cohomology theories represented by motivic spectra have?
\end{question}

\begin{question}\label{question2}
Is every cohomology theory with these transfers represented by a motivic spectrum?
\end{question}

These questions will be answered in \cref{sec:Framed} and \cref{sec:Recognition} respectively.

We begin with an analysis of these problems in classical topology.

\begin{definition}
Let \(\Man\) denote the category of smooth manifolds and \(\Spc\) the \(\infty\)-category of spaces.
A \textit{cohomology theory} on \(\Man\) is a functor 
\[F \from \Man^{\op} \longrightarrow \Spc\]
satisfying
\begin{enumerate}
    \item Descent with respect to arbitrary open coverings.
    \item Homotopy invariance: \(F(M) \overset{\simeq}{\to} F(M \times \R)\) for all \(M\in\Man\).
\end{enumerate}
\end{definition}

We have the following classification.
\begin{theorem}
\label{thm:TopTransfers}
The evaluation functor
\[\begin{array}{rcl}
\{\text{Coh. theories on } \Man\} & \overset{\simeq}{\longrightarrow} &\Spc \\
                            F & \longmapsto     & F(*)
\end{array}\]
is an equivalence.
\end{theorem}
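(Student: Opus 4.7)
The plan is to exhibit an explicit inverse to the evaluation functor. Send a space $X \in \Spc$ to the functor $G(X) \colon M \mapsto \Map(M, X)$, where on the right $M$ is interpreted via its underlying homotopy type (say the smooth singular complex). Homotopy invariance of $G(X)$ is automatic since $M \times \R$ and $M$ have the same underlying homotopy type, and descent follows from the classical ``small simplices'' fact that the smooth singular complex of $M$ is the colimit of the Čech nerve of any open cover. Evaluating at a point yields $G(X)(*) = X$, so one of the two composites is the identity on the nose.

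For the other composite, I would fix a cohomology theory $F$ and construct a natural comparison $\eta_F \colon F(M) \to \Map(M, F(*))$ sending a class $x \in F(M)$ to the map $\sigma \mapsto \sigma^{*}(x)$ on smooth singular simplices $\sigma \colon \Delta^n_{\mathrm{sm}} \to M$. On $M = *$ this is the identity, and by homotopy invariance it is an equivalence whenever $M$ is contractible (in particular on every Euclidean space).

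The main obstacle is upgrading this from contractible manifolds to arbitrary $M$. For this I would invoke the existence of \emph{good open covers}: every smooth manifold admits an open cover whose nonempty finite intersections are all diffeomorphic to $\R^n$, for instance via geodesically convex neighbourhoods in an auxiliary Riemannian metric. By the descent axiom, both $F(M)$ and $\Map(M, F(*))$ are limits over the Čech nerve of such a cover; the contractible case shows $\eta_F$ is a termwise equivalence on this nerve, hence $\eta_F(M)$ is an equivalence for every $M$. The single nontrivial classical input underlying both halves of the argument is that the Čech nerve of a good cover computes the homotopy type of $M$, which is precisely what makes $G(X)$ a sheaf and simultaneously makes the descent argument for $\eta_F$ go through.
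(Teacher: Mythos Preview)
Your argument is correct and matches the paper's proof essentially step for step: both use the constant-sheaf assignment \(X \mapsto (M \mapsto \Map(M, X))\) as the inverse, reduce to Euclidean spaces via homotopy invariance, and then bootstrap to arbitrary manifolds using good covers and the descent axiom. The only difference is packaging---the paper phrases the second half as conservativity of evaluation on an adjunction rather than building the comparison map \(\eta_F\) by hand---but the mathematical content is identical.
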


\begin{proof}
The constant sheaf \(\underline X\) on \(\Man\) with fibre a space \(X\) is given by
\[M \mapsto \Map(M, X).\]
(This is a nontrivial but well-known computation, which uses the fact that manifolds are sufficiently nice topological spaces, in particular locally contractible.) This computation shows that \(\underline X\) is homotopy invariant. In particular, \(X\mapsto \underline X\) is the left adjoint to the given evaluation functor, and since \(\underline X(*)=X\) it is fully faithful. To show that it is an equivalence of categories it remains to show that evaluation on the point is conservative. 

Let \(f\) be a morphism between cohomology theories and supposes that it is an isomorphism on a point. We need to show that it is an isomorphism in general. By homotopy invariance, we see that \(f\) is an isomorphism on the Euclidian spaces \(\R^n\) for all \(n\), and by using the descent property on good covers (covers are copies of \(\R^n\) such that all intersections are either empty or isomorphic to \(\R^m\)) we can deduce that it is indeed an isomorphism for all manifolds \(M\).
\end{proof}

In light of \Cref{thm:TopTransfers}, cohomology theories can be described by giving a space \(X\).
If \(X\) happens to be the infinite loop space of a spectrum \(E\), then the
associated cohomology theory 
\[M \mapsto \Map(M, \Omega^{\infty}E)\]
acquires some extra structure.

\begin{example}[{Atiyah duality}]
If \(M\) is compact, then \(\Sigma_+^{\infty} M\) has as dual the Thom spectrum \(M^{-T_M}\), where \(T_M\) is the tangent bundle of \(M\). This implies that for every morphism \(f \from M \to N\) between compact manifolds there is an induced morphism of Thom spectra \(M^{-T_M} \leftarrow N^{-T_N}\). Mapping these spectra into another spectra \(E\) we get a covariant pushforward map \todo{Is this the equation the correct way to express this?}
\[\Map(M^{-T_M}, E) \overset{}{\longrightarrow} \Map(N^{-T_N}, E).\]
If \(T_M\) and \(T_N\) are oriented with respect to \(E\) (e.g. if \(E\) is complex oriented and \(M\) and \(N\) are complex manifolds), then the pushforward involves ordinary shifts. 
\end{example}

One can more generally define a cohomological pushforward along any proper morphism \(f \from M \to N\) between (not necessarily compact) smooth manifolds. This is a consequence of the ``formalism of six operations'', which is a vast generalization of Poincaré/Atiyah duality introduced by Grothendieck. For any morphism \(f \from M \to N\) in \(\Man\), there is the usual pullback/pushforward adjunction
\[
\begin{tikzcd}[column sep=large]
\Shv(M, \Spt) \arrow[r, "f_*"{below}, shift right=3pt]
& \Shv(N, \Spt), \arrow[l, "f^*"{above}, shift right=3pt]
\end{tikzcd}
\]
as well as the ``exceptional'' adjunction
\[
\begin{tikzcd}[column sep=large]
\Shv(M, \Spt) \arrow[r, "f_!"{above}, shift left=3pt]
& \Shv(N, \Spt). \arrow[l, "f^!"{below}, shift left=3pt]
\end{tikzcd}
\]
Here, \(f_!\) is the pushforward with compact support, which is essentially determined by the following two properties:
\begin{itemize}
\item If \(f\) is proper, then \(f_!=f_*\).
\item If \(f\) is étale (i.e., a local homeomorphism), then \(f^!=f^*\).
\end{itemize}
Defining the virtual tangent bundle of \(f\) by 
\[T_f:= T_M - f^*T_N,\] 
there exists then a canonical natural transformation 
\[\mathfrak p_f\from \Sigma^{T_f} f^* \to f^!,\]
where \(\Sigma^{T_f}\) denotes suspension by the virtual tangent bundle, i.e., smashing with its Thom spectrum. This functor is an automorphism of \(\Shv(M, \Spt)\), and locally on \(M\), it is the ordinary suspension by the rank of \(T_f\). We do not explain here the construction of \(\mathfrak p_f\). Suffice it to say, because any morphism factors as the composition of a submersion and a closed immersion and because of the tubular neighborhood theorem, the transformation \(\mathfrak p_f\) is essentially determined by the cases where \(f\) is a submersion (in which case it is an \emph{equivalence} \(\Sigma^{T_f}f^*\simeq f^!\)), and where \(f\) is the zero section of a vector bundle. We will explain later the algebro-geometric analogue of \(\mathfrak p_f\) in more details.

If \(E_M\) denotes the constant sheaf on \(M\) with value \(E\), we obtain morphisms
\[\Gamma(M, \Sigma^{T_f}E_M) \simeq \Gamma(M, \Sigma^{T_f}f^*E_N) \xrightarrow{\mathfrak p_f} \Gamma(M, f^!E_N) \simeq \Map(\S_M, f^!E_N).\]
If \(f\) is proper we furthermore have morphisms
\[\Map(\S_M, f^!E_N) \simeq \Map(f_*f^*(\S_N), E_N) \rightarrow
\Map(\S_N, E_N) = \Gamma(N, E_N).\]
Composing these morphisms gives us a pushforward map
\[\Gamma(M, \Sigma^{T_f}E_M) \to \Gamma(N, E_N)\]
also known as the \textit{transfer} along \(f\). This provides an answer to \cref{question1} in the classical topological context.

For \cref{question2}, the question becomes whether one can recover the spectrum structure on \(E\) from the infinite delooping \(\Omega^{\infty}E\) by using these transfers? 
In particular, if \(f \from M \to N\) is a finite étale map (i.e., a finite covering map), then \(T_f = 0\), in which case there is a canonical transfer 
\begin{equation*}
    \Map(M, \Omega^{\infty}E) \to \Map(N, \Omega^{\infty}E).
\end{equation*}

By taking \(N\) to be a point and \(M\) to be two distinct points, the transfer provides an addition map
\begin{equation*}\label{eq:Coherentness}
    \Omega^{\infty}E \times \Omega^{\infty} E \overset{+}{\longrightarrow} \Omega^{\infty}E.
\end{equation*}
We need to encode these transfers in a suitably coherent manner to recover the commutativity of $+$ up to coherent homotopy. Historically, this has been difficult because one lacked the tools to precisely express the required coherence. To achieve this we introduce the 2-category of correspondences:

\begin{definition}
The \textit{category of finite étale correspondences} in \(\Man\), denoted by \(\Corf(\Man)\), is the 2-category where objects are smooth manifolds, and morphisms from \(M\) to \(P\) are spans
\[\begin{tikzcd}
& N \arrow[dl, "\text{f.ét.}"{description}] \arrow[dr] \\
M & & P,
\end{tikzcd}\]
such that the map \(N \to M\) is finite étale. These spans are known as \textit{correspondences}.
Composition of two correspondences \((M \leftarrow N \rightarrow P)\) and \((P \leftarrow Q \rightarrow R)\) is defined as the pullback in the diagram
\[\begin{tikzcd}[column sep = small, row sep = small]
& & \text{pullback} \arrow[dl] \arrow[dr] & & \\ 
& N \arrow[dl] \arrow[dr] & & Q \arrow[dl] \arrow[dr] \\
M & & P & & R.
\end{tikzcd}\]
The pullback exists in \(\Man\) because the map \(Q \to P\) is finite étale.
\end{definition}

The following theorem then provides an answer to \cref{question2}.
\begin{theorem}\label{thm:Q1}
There exists a functor
\[\begin{array}{rcl}
\Spt  & \longrightarrow & \{\text{Coh. theories on } \Corf(\Man)\} \\
E & \longmapsto     & \Map( - , \Omega^{\infty}E),
\end{array}\]
which restricts to an equivalence between connective spectra \(\Spt_{\geq 0}\) and grouplike cohomology theories on \(\Corf(\Man)\).
\end{theorem}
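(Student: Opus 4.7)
The plan is to reduce the claim to the classical Segal--May recognition principle that identifies connective spectra with grouplike commutative monoids in \(\Spc\).

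To construct the functor, I would start with \Cref{thm:TopTransfers}, which already produces the underlying cohomology theory \(\Map(-, \Omega^{\infty}E)\) on \(\Man\), and enhance it to a cohomology theory on \(\Corf(\Man)\) by producing transfers along finite étale maps. For any finite étale \(f\from M\to N\) the virtual tangent bundle \(T_f\) vanishes, so the natural transformation \(\mathfrak p_f\from f^*\to f^!\) from the six-functor formalism described in the text is an equivalence; combined with \(f_!=f_*\) (valid because \(f\) is proper), this yields a canonical transfer \(\Map(M, \Omega^{\infty}E)\to \Map(N, \Omega^{\infty}E)\). Functoriality on the \(2\)-category \(\Corf(\Man)\) and compatibility with horizontal composition by pullback both follow from proper and étale base change in the six-functor formalism.

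For the equivalence part, I would factor it as
\[\Spt_{\geq 0}\;\xrightarrow{\simeq}\;\{\textnormal{grouplike } E_{\infty}\textnormal{-spaces}\}\;\xrightarrow{\simeq}\;\{\textnormal{grouplike coh. theories on } \Corf(\Man)\}.\]
The left-hand equivalence is the classical recognition principle. The right-hand equivalence is obtained via evaluation at a point: restricting a cohomology theory \(F\) along the inclusion \(\Corf(\textnormal{Fin})\hookrightarrow \Corf(\Man)\) of \(0\)-dimensional manifolds yields a product-preserving functor \(\textnormal{Span}(\textnormal{Fin})\to \Spc\), which is exactly a commutative monoid (i.e., \(E_{\infty}\)-space) structure on \(F(*)\). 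Grouplikeness of the cohomology theory translates to grouplikeness of the associated monoid.

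The main obstacle is showing that this evaluation-at-a-point functor is an equivalence onto grouplike \(E_{\infty}\)-spaces. That is, one must reconstruct a cohomology theory on \(\Corf(\Man)\) from a single space \(X \simeq F(*)\) equipped with an \(E_{\infty}\)-structure. By \Cref{thm:TopTransfers} applied after forgetting transfers, the underlying plain cohomology theory must be \(\Map(-, X)\); the delicate point is recovering the finite étale transfers. Every finite étale \(f\from M\to N\) is locally on \(N\) a trivial cover \(\{1,\ldots,n\}\times N\to N\), and the transfer along such a trivial cover is forced to be the \(n\)-fold sum map coming from the \(E_{\infty}\)-structure; the globally coherent transfer is then produced by good-cover descent, in the spirit of the proof of \Cref{thm:TopTransfers}. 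Encoding all the required \(\infty\)-categorical coherence is the technical heart of the argument, and is perhaps most cleanly phrased as a left Kan extension statement along \(\Corf(\textnormal{Fin})\hookrightarrow\Corf(\Man)\) combined with descent.
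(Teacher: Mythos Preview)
Your proposal is correct and follows essentially the same route as the paper's treatment. Note that the paper does not actually prove \Cref{thm:Q1} in-line: it simply states ``We refer to \cite[Appendix C]{BH} for a proof of Theorem~\ref{thm:Q1} from Theorem~\ref{thm:TopTransfers}.'' The argument you outline---identifying \(E_\infty\)-structures with product-preserving functors out of \(\textnormal{Span}(\textnormal{Fin})=\Corf(\textnormal{Fin})\), invoking the Segal--May recognition principle, and reducing to \Cref{thm:TopTransfers} for the underlying cohomology theory---is exactly the content of that appendix, so your sketch matches the intended proof.
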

Such cohomology theories are automatically evaluated in the \(\infty\)-category of \(E_\infty\)-spaces, hence the ``grouplike'' condition means that the addition structure
\[\Omega^{\infty}E \times \Omega^{\infty}E  \to \Omega^{\infty}E \]
should be a group, i.e., elements should have inverses up to homotopy.
We refer to \cite[Appendix C]{BH} for a proof of Theorem~\ref{thm:Q1} from Theorem~\ref{thm:TopTransfers}.

\subsection*{Acknowledgements}
The authors greatly acknowledge the support from the research project  ``Motivic Geometry'' at the Centre for Advanced Study at the Norwegian Academy of Science and Letters in Oslo, Norway.

\section{Transfers for motivic spectra}\label{sec:MotivicTransfers}
We will now focus on \cref{question1}, i.e., transfers for motivic spectra.

Let \(\SH(S)\) denote the \(\infty\)-category of motivic spectra over \(S\). There is a six functor formalism on the system of categories \(S \mapsto \SH(S)\), i.e.,
functors \[f^*, f_*, f^!, f_!, \underline{\Hom}, \otimes,\] with adjuctions \(f^* \dashv f_*\) and \(f_! \dashv f^!\), and equivalences \(f^! \simeq f^*\) if \(f\) is étale and \(f_! \simeq f_*\) if \(f\) is proper. 

For the case of smooth manifolds, we saw in the previous section that there are transfers for any proper morphism. A key difference in the algebraic-geometric context is that 
we are now also willing to consider schemes which are not smooth. This changes the picture since morphisms of smooth manifolds are very far from being arbitrary morphisms, and in particular they are always of local complete intersections. The condition of local complete intersection turns out to be the most general condition for transfers to exist in this setting.

\begin{definition}
A morphism of schemes \(f \from X \to S\) is a \textit{local complete intersection} (abbreviated ``lci'') if locally on \(X\) it factors as 
\begin{center}
    \begin{tikzcd}
    X \arrow[r, "i", hook, closed] \arrow[dr, "f"] & V \arrow[d, "p"] \\
    & S,
    \end{tikzcd}
\end{center}
where \(i\) is closed immersion which locally is cut out by a regular sequence (i.e., a \textit{regular} closed immersion) and \(p\) is a smooth morphism.
\end{definition}

For any local complete intersection morphism \(f \from X \to S\) with a global factorization as above, we can define the \textit{virtual tangent bundle} of \(f\) by
\[T_f := i^*T_p - N_i \in K(X).\]
Here, \(T_p\) is the relative tangent bundle of \(p\), \(N_i\) is the normal bundle of \(i\), and \(K(X)\) denotes the \(K\)-theory space of the scheme \(X\).
One can show that \(T_f\) does not depend on the factorization of \(f\) (it is in fact the image in \(K(X)\) of the tangent complex of \(f\), which is defined even if no factorization exists).

We will now sketch the fundamental construction of Déglise, Jin, and Khan \cite{DJK}, which is the source of the transfers in stable motivic homotopy theory. We recall that any element \(\xi \in K(X)\) has an associated Thom spectrum in \(\SH(X)\), and smashing with it defines a self-equivalence
\[\Sigma^{\xi} \from \SH(X) \overset{\simeq}{\longrightarrow} \SH(X).\]

\begin{construction}[\cite{DJK}]
For an lci morphism \(f \from X \to S\) with a global factorization, we construct a canonical transformation
\[\mathfrak p_f\from\Sigma^{T_f}f^* \longrightarrow f^!.\]
\end{construction}
\begin{proof}[Sketch]
Because of the global lci factorization, it suffices to consider the case of smooth morphisms and regular closed immersions. 

If \(f\) is smooth we let \(\mathfrak p_f\) be the well-known purity equivalence
\[\Sigma^{T_f}f^* \simeq f^!\]
due to Voevodsky and Ayoub.

If \(f=i \from X \hookrightarrow V\) is a regular closed immersion we consider the deformation of \(V\) to the normal cone, which is a scheme \(D_X V\) that fits in a diagram
\begin{center}
    \begin{tikzcd}
        & N_i \arrow[r, hook, closed, "t"] \arrow[dd] \arrow[dl, "\pi", twoheadrightarrow] & D_X V \arrow[dd] & \G_m \times V \arrow[l, hook', open', "u" above] \arrow[dd, equal] \\
        X \arrow[dr, hook, "i"] \\
        & V \arrow[r, hook, closed, "0" above] \arrow[dr, equal] & \A^1_V \arrow[d] & \G_m \times V \arrow[l, hook', open'] \arrow[dl, "q"] \\
        & & V,
    \end{tikzcd}
\end{center}
where \(0\from V \hookrightarrow \A^1_V\) is the zero section, \(\pi\from N_i \to X\) is the normal bundle of \(i\), \(\G_m \times V \hookrightarrow D_XV\) and \(\G_m \times V \hookrightarrow \A^1_V\) are the open complements, and both squares are cartesian\footnote{For an arbitrary closed immersion \(i\), the fibre of \(D_XV\) over \(0\) is the normal cone of \(i\), which agrees with the normal bundle in case of a regular immersion.}. Let \(p\) denote the composition of the maps \(D_XV \to \A^1_V \to V\).

The top row gives rise to a localization triangle
\[t_*t^! \to \id_{\SH(D_XV)} \to u_*u^! \overset{\partial}{\to} t_*t^![1],\]
and in \(\SH(\G_m)\) there is a canonical (universal unit) map \(S^0 \to \G_m\). 
Suspending this map with \(S^1\) we get a new map 
\[S^1 \overset{(*)}{\longrightarrow} S^1 \wedge \G_m = T\]
where \(T = S^1 \wedge \G_m\) is the Thom space of the trivial line bundle. We want to define a natural transformation 
\[\mathfrak p_i\from i^* \longrightarrow \Sigma^{N_i}i^!,\]
or equivalently, by adjunction, a map
\[\id \to i_*\Sigma^{N_i}i^!.\]
This we obtain from the composition
\[\id_{\SH(V)}[1] \overset{(*)}{\to} q_* \Sigma_T q^* \simeq q_*q^! \simeq p_*u_*u^!p^! \overset{\partial}{\to} p_*t_*t^!p^![1] \simeq i_*\pi_* \pi^! i^![1],\]
and using the purity equivalence \(\mathfrak p_\pi\from \pi^*\Sigma^{N_i}\simeq \pi^!\) and the fact that \(\pi^*\) is a fully faithful functor, we further get 
\[i_*\pi_* \pi^! i^! \simeq i_*\Sigma^{N_i} i^!.\]
Composing these maps gives the desired natural transformation
\[\id_{\SH(V)} \to i_*\Sigma^{N_i} i^!.\]

It remains to prove that this procedure is independent of the factorization, composing various \(f\)'s, etc. We refer the curious reader to \cite{DJK} for a thorough treatment of these facts. 
\end{proof}

\begin{definition}
Let \(E \in \SH(S)\) be a motivic spectrum over \(S\).
For a morphism \(f \from X \to S\) and an element \(\xi\in K(X)\) we define
\[E(X, \xi) := \text{Map}_{\SH(X)}(\mathbbm{1}_X, \Sigma^{\xi}f^*E) \in \Spc.\]
\end{definition}

\begin{example}
If \(\xi = n\), then \(\Sigma^{\xi} \cong \Sigma^{2n, n}\). Thus
\[\pi_i E(X, \xi) = E^{2n-i, n}(X).\]
\end{example}

\begin{example}
If \(E = \KGL\), then \(E(X, \xi)\cong \text{KH}(X)\), where KH denotes the homotopy K-theory of \(X\). Note that this does not depend on \(\xi\). 
\end{example}

\begin{example}
If \(E = H\mathbb{Z}\) and \(X\) is smooth over a field or a discrete valuation ring, then \(E(X, \xi) \cong z^r(X, \bullet)\) where \(r = \text{rank}(\xi)\). Here \(z^r(X, \bullet)\) denotes Bloch's cycle complex.
\end{example}

\begin{theorem}\label{thm:Transfers}
If \(f \from X \to S\) is proper and lci, the natural transformation \(\mathfrak p_f\from\Sigma^{T_f}f^* \to f^!\) induces a transfer map
\[E(X, T_f + f^* \xi) \longrightarrow E(S, \xi),\]
where \(\xi \in K(S)\).
\end{theorem}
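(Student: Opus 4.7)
The plan is to assemble the transfer as a short chain of natural maps, each of which is either a direct consequence of the construction of $\mathfrak p_f$ or a formal consequence of the six-functor formalism together with the properness hypothesis. Let me unwind the definition: an element of $E(X, T_f + f^*\xi)$ is by definition a map $\mathbbm 1_X \to \Sigma^{T_f + f^*\xi} f^*E$ in $\SH(X)$, and an element of $E(S, \xi)$ is a map $\mathbbm 1_S \to \Sigma^\xi E$ in $\SH(S)$.

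First, I would apply $\mathfrak p_f$ to the motivic spectrum $\Sigma^\xi E \in \SH(S)$. Using that the Thom functor is compatible with pullback, i.e.\ $\Sigma^{f^*\xi} f^* \simeq f^* \Sigma^\xi$, one has $\Sigma^{T_f + f^*\xi} f^*E \simeq \Sigma^{T_f} f^*(\Sigma^\xi E)$, so that $\mathfrak p_f$ supplies a natural transformation $\Sigma^{T_f + f^*\xi} f^*E \to f^!(\Sigma^\xi E)$, inducing a map
$$\Map_{\SH(X)}(\mathbbm 1_X, \Sigma^{T_f + f^*\xi} f^*E) \longrightarrow \Map_{\SH(X)}(\mathbbm 1_X, f^!\Sigma^\xi E).$$

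Second, I would invoke the adjunction $f_! \dashv f^!$ to identify the target with $\Map_{\SH(S)}(f_! \mathbbm 1_X, \Sigma^\xi E)$. Since $f$ is proper, the equivalence $f_! \simeq f_*$ from the six-functor formalism rewrites this as $\Map_{\SH(S)}(f_* \mathbbm 1_X, \Sigma^\xi E)$. Finally, the unit of $f^* \dashv f_*$ provides a map $\mathbbm 1_S \to f_* f^* \mathbbm 1_S = f_* \mathbbm 1_X$, and precomposition yields
$$\Map_{\SH(S)}(f_* \mathbbm 1_X, \Sigma^\xi E) \longrightarrow \Map_{\SH(S)}(\mathbbm 1_S, \Sigma^\xi E) = E(S, \xi).$$
Composing the three maps produces the desired transfer; this is precisely the algebro-geometric analogue of the topological chain $\Gamma(M, \Sigma^{T_f}E_M) \to \Gamma(N, E_N)$ displayed earlier.

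There is no serious obstacle in this argument, because the substantial work has already been absorbed into the preceding Déglise–Jin–Khan construction of $\mathfrak p_f$. The only subtlety worth flagging is that the argument as stated assumes a global lci factorization so that $T_f$ and $\mathfrak p_f$ are defined on the nose; for a general proper lci $f$ one must first glue the locally defined data into a global natural transformation $\mathfrak p_f \from \Sigma^{T_f} f^* \to f^!$, and it is the verification of this gluing (together with its compatibility with composition) that constitutes the real technical content, carried out in \cite{DJK}.
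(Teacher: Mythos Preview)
Your argument is correct and is exactly the approach the paper has in mind: the theorem is stated without a separate proof, as it is the direct algebro-geometric transcription of the topological chain displayed in the introduction (apply \(\mathfrak p_f\), use the adjunction \(f_!\dashv f^!\) together with \(f_!\simeq f_*\) for proper \(f\), then precompose with the unit \(\mathbbm 1_S\to f_*f^*\mathbbm 1_S\)). You have also correctly identified that the only nontrivial input is the construction and gluing of \(\mathfrak p_f\), which the paper defers to \cite{DJK}.
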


\begin{remark}
More generally, if \(f \from X \to S\) is proper and \(X^{\der}\) is a derived structure on \(X\) which is lci/quasi-smooth over \(S\), a theorem by Adeel A. Khan (\cite{Kha}) gives a transfer map
\[E(X, T_f^{\der}) \longrightarrow E(S).\]
\end{remark}

\section{The \(\infty\)-category of framed correspondences}\label{sec:Framed}
Following the approach to \cref{question1} in classical topology, we are interested in those transfers that do not shift the degree, i.e., that induce a covariant functoriality on the ``unshifted'' cohomology theory. For this to work we saw in the previous section that we need the virtual tangent bundle of a map \(f\) to vanish. We are thus led to the following definition:

\begin{definition}
Suppose \(f \from X \to S\) is a lci morphism. A (stable 0-dimensional) \textit{framing} of \(f\) is a trivialization of the virtual tangent bundle \(T_f\), 
i.e., a path in \(K(X)\) between \(T_f\) and \(0\).
\end{definition}

If \(f\) is a framed proper lci morphism, we see by \Cref{thm:Transfers} that it induces a transfer map in cohomology that does not shift the degree. This provides an answer to \cref{question1}.

\begin{definition}
We define the \(\infty\)-category \(\Cor^{\fr}(\Sm)\) with objects smooth \(S\)-schemes and morphisms given as spans \((X \overset{f}{\leftarrow} Z \rightarrow Y)\),
where \(f\) is finite lci and framed (in the sense that there is given a path \(\alpha \from T_f \overset{\simeq}{\longrightarrow} 0\) in \(K(Z)\)).

Composition of morphisms \((X \leftarrow Z \rightarrow Y)\) and \((Y \leftarrow W \rightarrow V)\)
is defined by pullback
\begin{center}
    \begin{tikzcd}[column sep=tiny, row sep=tiny]
    & & \text{pullback} \arrow[dl, "h"] \arrow[dr, "b"] & & \\
    & Z \arrow[dl, "f"] \arrow[dr, "a"] & & W \arrow[dl, "g"] \arrow[dr] & \\
    X & & Y & & V,
    \end{tikzcd}
\end{center}
where \(T_{f \circ h} \simeq T_h + h^* T_f\), \(T_h \simeq b^*(T_g)\simeq 0\) and \(h^*T_f \simeq 0\).
\end{definition}

\begin{remark}
The condition for a morphism to be finite lci and framed implies that it is flat. This is relevent because it forces the condition of being lci to be stable under base change. In general the lci condition is only stable under tor-independent base change. 
\end{remark}

\begin{remark}
The reason why we only consider finite maps is that we want the framed transfers to be compatible with the Nisnevich topology. That is, we want the Nisnevich sheafication of a presheaf with framed transfers to have framed transfers, and this requires finite morphisms. It is possible to relax the finiteness condition by instead considering proper morphisms, see \cref{OpenQuestion}. However, we must then instead consider derived schemes since such morphisms are not flat.
\end{remark}

\begin{remark}
The category \(\Cor^{\fr}(\Sm)\) is semi-additive (i.e., finite sums and finite products coincide) and has a symmetric monoidal structure given by the Cartesian products (\(X \otimes Y = X \times_S Y\)). Moreover, there are canonical functors
\[\Corf(\Sm) \longrightarrow \Cor^{\fr}(\Sm),\]
because finite étale morphisms have a canonical framing. By forgetting the framing we moreover have a functor
\[\Cor^{\fr}(\Sm) \longrightarrow \Cor^{\fsyn}(\Sm)\]
to the category of finite syntomic (flat and lci) correspondences. A further forgetful functor takes us to Voevodsky's category of correspondences \(\Cor(\Sm)\).
\end{remark}

\section{The recognition principle}\label{sec:Recognition}
We here mention three fundamental theorems, due to work by Garkusha, Panin, Ananyevskiy, and Neshitov, and Elmanto, Hoyois, Khan, Sosnilo, and Yakerson. These will allow us to prove a recognition principle for framed correspondences that answers \cref{question2}.

\begin{definition}
Let \(\mathcal{H}(S)\) denote the \(\infty\)-category of \(\A^1\)-invariant Nisnevich sheaves on \(\Sm\), and let \(\mathcal{H}^{\fr}(S)\)
be the \(\infty\)-category of \(\A^1\)-invariant Nisnevich sheaves on \(\Cor^{\fr}(\Sm)\) (i.e., presheaves on \(\Cor^{\fr}(\Sm)\) whose restriction to \(\Sm\) are \(\A^1\)-invariant Nisnevich sheaves).
We define \(\SH(S)\) and \(\SH^{\fr}(S)\) as the \(\infty\)-categories of \(T\)-spectra in \(\mathcal{H}(S)\) and \(\mathcal{H}^{\fr}(S)\), respectively.
\end{definition}

The functor \(\Sm \to \Cor^{\fr}(\Sm)\) induces adjoint functor pairs
\begin{center}
    \begin{tikzcd}
    \mathcal{H}(S) \arrow[r, shift left=2pt, "\text{free}"] \arrow[d, shift right=2pt, "\Sigma^\infty_+" left] & \mathcal{H}^{\fr}(S) \arrow[l, shift left=2pt, "\text{forget}"] \arrow[d, shift right=2pt, "\Sigma^\infty_{\fr}" left] \\
    \SH(S) \arrow[r, shift left=2pt, "\text{free}"] \arrow[u, shift right=2pt, "\Omega^{\infty}" right]& \SH^{\fr}(S) \arrow[l, shift left=2pt, "\text{forget}"] \arrow[u, shift right=2pt, "\Omega^{\infty}_{\fr}" right] .
    \end{tikzcd}
\end{center}

\begin{theorem}[Reconstruction theorem, {\cite[Theorem 16]{Hoy}}]\label{thm:1}
For every scheme \(S\), the canonical functor
\[\SH(S) \overset{\sim}{\longrightarrow} \SH^{\fr}(S)\]
is an equivalence of \(\infty\)-categories.

Thus every motivic spectrum has a unique structure of framed transfers.
\end{theorem}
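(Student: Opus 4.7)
The plan is to analyze the free/forget adjunction between $\SH(S)$ and $\SH^{\fr}(S)$ and show it is an adjoint equivalence. Since both are stable presentable $\infty$-categories, it suffices to establish two things: (i) the forgetful functor is conservative, and (ii) the unit of the adjunction is an equivalence for every $E \in \SH(S)$.

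Conservativity should be the formal part. An object of $\SH^{\fr}(S)$ is a $T$-spectrum valued in $\A^1$-invariant Nisnevich sheaves on $\Cor^{\fr}(\Sm)$, and the forgetful functor restricts such presheaves along the canonical inclusion $\Sm \to \Cor^{\fr}(\Sm)$. Since the Nisnevich topology and $\A^1$-invariance conditions are both tested on the $\Sm$-side, an object whose underlying $T$-spectrum in $\mathcal{H}(S)$ vanishes has all of its sections vanishing, hence is itself zero.

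For (ii), I would first construct a candidate inverse $\SH(S) \to \SH^{\fr}(S)$ using the Déglise--Jin--Khan transfers of \cref{thm:Transfers}. Given a motivic spectrum $E$ and a framed correspondence $(X \overset{f}{\leftarrow} Z \to Y)$ with framing $\alpha \from T_f \simeq 0$ in $K(Z)$, the DJK transfer along the finite syntomic map $f$ yields a map $E(Z, T_f) \to E(X)$, and the framing $\alpha$ provides a canonical identification $E(Z, 0) \simeq E(Z, T_f)$; composing with the map $Z \to Y$ endows $\Omega^{\infty} E$ with framed transfers. Establishing the coherent compatibility of these transfers with composition of correspondences (which reduces to transitivity of DJK transfers) and with base change (guaranteed by proper base change in the six-functor formalism) is delicate but routine, and produces a section of the forgetful functor at the level of $\infty$-categories.

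The hard part will be upgrading this section to a two-sided inverse, i.e., establishing the unit equivalence itself. A strong warning sign is that before $T$-stabilization the analogous statement fails: $\mathcal{H}^{\fr}(S) \to \mathcal{H}(S)$ is \emph{not} an equivalence, so the proof must genuinely use $T$-stability. The crucial technical input is a cancellation-type theorem, in the spirit of Ananyevskiy--Garkusha--Neshitov--Panin, asserting that after $S^1$-stabilization and group completion of framed motivic spaces, a canonical framed self-correspondence of $\G_m$ invertibly implements $\G_m$-smashing. This rigidifies the framed transfer structure to the point where it is determined by the underlying motivic spectrum; combined with the DJK section constructed above, it yields the unit equivalence and completes the proof.
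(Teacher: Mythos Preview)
Your reduction to conservativity plus the unit equivalence is fine, but the proposed route to the unit equivalence has a genuine gap, and it diverges entirely from the paper's argument.

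The paper does not use the DJK transfers or the six-functor formalism here. Instead it computes the mapping spaces \(\Omega^n_{\P^1}L_{\Nis}\Sigma^n_T Y_+\) directly for \(Y\in\Sm\). Voevodsky's lemma identifies sections of the Nisnevich-local quotient \((\A^n/(\A^n-0))\wedge Y_+\) over \((\P^1_X)^n\) with \emph{equationally framed} data: a closed subset \(Z\subset \A^n_X\) finite over \(X\), together with \(n\) functions cutting it out and a map to \(Y\) on a henselian neighbourhood. Such a \(Z\) is lci of virtual dimension \(0\) with a canonical trivialisation of its cotangent complex, yielding a forgetful map to \(\Cor^{\fr}_S(X,Y)\). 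The key input is then the result of \cite[Corollary 2.3.27]{EHKSY} that
\[
\underset{n\to\infty}{\text{colim }}\Omega^n_{\P^1}L_{\Nis}\Sigma^n_T Y_+ \longrightarrow \Cor^{\fr}_S(-,Y)
\]
is a motivic equivalence for every smooth \(Y\). This establishes the unit equivalence on generators over an \emph{arbitrary} base \(S\).

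Your argument, by contrast, invokes cancellation, but \cref{thm:2} is only available over a perfect field, whereas \cref{thm:1} is asserted for every scheme \(S\); so any proof resting on cancellation cannot establish the full statement. (In the paper's logic, cancellation is a separate ingredient feeding into the recognition principle \cref{cor:Q1}, not into reconstruction.) Moreover, even granting a fully coherent DJK-based section \(s\from \SH(S)\to\SH^{\fr}(S)\) of the forgetful functor, you have not explained why \(s\) agrees with the \emph{free} functor; cancellation tells you \(T\wedge(-)\) is fully faithful on grouplike framed motivic spaces, but it is not clear how this alone forces the unit of the free/forget adjunction to be an equivalence. You would still need to identify what the free functor does to \(\Sigma^\infty_+ Y\), and that is precisely what Voevodsky's equationally framed description and the EHKSY comparison supply.
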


This result requires new ideas since the analogue statement is false for Voevodsky's correspondences because it would yield an equivalence \(\SH^{\fr}(S) \simeq \textbf{DM}(S)\). 

We provide a brief sketch of the main ideas involved in the proof.
\begin{proof}[Sketch of proof]
The key idea is to apply Voevodsky's lemma:
That is, if \(X\) is a scheme, and \(U \subset X\) is an open subscheme, we can identify \(L_{\Nis}(X/U)(Y)\) with the set of pairs \((Z, \phi)\) such that \(Z \subset Y\) is a closed subset and \(\phi \from Y^h_Z \to X\) is a map such that \(\phi^{-1}(X \setminus U)=Z\)

Then consider the following mapping space for \(X, Y \in \Sm\) together with the equivalence:
\[\text{Map}\left((\P^1)^{\wedge n} \wedge X_+, L_{\Nis}\left( \A^n/\A^n - 0\right) \wedge Y_+ \right) \cong (\Omega^n_{\P^1}L_{\Nis}\Sigma^n_T Y_+)(X).\]
Applying Voevodsky's lemma we find that this equals the set of pairs \((Z, \phi)\) such that 
\begin{enumerate}
    \item \(Z \subset (\P^1_X)^n\) is a closed subscheme,
    \item \(Z \cap \partial (\P^1_X)^n = \emptyset\), 
    \item \(\phi \colon ((\P_X^1)^n)^h_Z \to \A^1_Y\) such that \(\phi^{-1}(0) = Z.\)
\end{enumerate}
Thus \(Z\) is cut out by \(n\) equations with codimension \(n\), and in particular, it is framed. This shows that there is a forgetful map to \(\Cor^{\fr}_S(X, Y)\). We conclude by \cite[Corollary 2.3.27]{EHKSY} which says that the forgetful map
\[\underset{n \to \infty}{\text{colim }} \Omega^n_{\P^1}L_{\Nis}\Sigma^n_T Y_+ \longrightarrow \Cor_S^{\fr}(-,Y)\]
is a motivic equivalence if \(Y \in \Sm\).
\end{proof}

\begin{theorem}[Cancellation theorem, {\cite[Proposition 5.2.7]{EHKSY}}]\label{thm:2}
Let \(k\) be a perfect field. The functors 
\[S^1 \wedge (-),\; \G_m \wedge (-) \colon \mathcal{H}^{\fr}(k)^{\gp} \longrightarrow \mathcal{H}^{\fr}(k)^{\gp}\]
are fully faithful, where \(\mathcal{H}^{\fr}(k)^{\gp}\) is the full subcategory of \(\mathcal{H}^{\fr}(k)\) consisting of grouplike objects.
\end{theorem}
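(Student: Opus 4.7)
The plan is to treat the two functors separately, since $S^1$-cancellation follows formally from the grouplike hypothesis, while $\G_m$-cancellation is the substantive content of the theorem. In either case, full faithfulness of $\Sigma$ is equivalent to the unit $\id \to \Omega_\Sigma \circ \Sigma$ being a natural equivalence on $\mathcal{H}^{\fr}(k)^{\gp}$, so in both situations the task reduces to showing $Y \xrightarrow{\sim} \Omega_\Sigma(\Sigma Y)$ for every grouplike $Y$.

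For $\Sigma = S^1 \wedge (-)$, the framed transfers endow every object of $\mathcal{H}^{\fr}(k)$ with an $E_{\infty}$-structure, so the objects of $\mathcal{H}^{\fr}(k)^{\gp}$ are in particular sectionwise grouplike $E_{\infty}$-spaces. The classical group completion theorem then gives a sectionwise equivalence $Y \simeq \Omega(S^1 \wedge Y)$ on representables in $\Sm$. Since the $E_{\infty}$-structure coming from framed transfers, Nisnevich sheafification, and $\A^1$-localization are all mutually compatible, this equivalence descends to $\mathcal{H}^{\fr}(k)^{\gp}$.

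For $\Sigma = \G_m \wedge (-)$ the argument is substantially deeper and follows the strategy of Ananyevskiy--Garkusha--Panin, recast in the $\infty$-categorical setting of EHKSY. First, using \cref{thm:1} together with the explicit formula $(\Omega^n_{\P^1} L_{\Nis} \Sigma^n_T Y_+)(X)$ sketched in its proof, one computes the mapping spaces underlying $\Omega_{\G_m}(\G_m \wedge Y)$ on representables in terms of framed correspondences with one coordinate landing in a $\G_m$-factor. Next, one constructs a candidate inverse to the unit by the ``pullback to $1 \in \G_m$'' operation, which at the level of correspondences takes a framed cycle in $X \times \G_m$ and intersects it with the fibre over $1$. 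Finally, one checks via a moving lemma for framed correspondences (where perfectness of $k$ enters) that this construction is well-defined up to $\A^1$-homotopy and is a two-sided inverse to $\G_m$-suspension after Nisnevich localization.

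The main obstacle is the last step: verifying that the ``intersect with $1$'' construction actually descends to a homotopy inverse on $\Omega_{\G_m} \Sigma_{\G_m}$ after the two localizations. This requires producing explicit $\A^1$-homotopies between framed correspondences on $X \times \G_m \times Y$ and those pulled back from $X \times Y$, in both directions of the composite. This is the technical heart of EHKSY §5.2, and also the reason the grouplike hypothesis is essential; a full proof must either carry out these explicit homotopies or invoke the Ananyevskiy--Garkusha--Panin cancellation theorem in its linear/group-complete form as a black box.
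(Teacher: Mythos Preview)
The paper does not actually prove \cref{thm:2}; it only cites \cite[Proposition 5.2.7]{EHKSY} and remarks afterwards that the argument is analogous to Voevodsky's cancellation theorem for finite correspondences, with Bachmann's finite-flat cancellation \cite{Tom} as an alternative route. So there is no detailed proof in the paper to compare against, and your outline is already more than the paper offers.

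That said, two comments on the outline itself. Your treatment of the \(S^1\)-case is correct in spirit: in grouplike \(E_\infty\)-objects the bar construction computes suspension and is inverse to \(\Omega\). You should be a bit more careful that the suspension in \(\mathcal{H}^{\fr}(k)^{\gp}\) is the motivically localized one; the point is that \(B\) on grouplike \(E_\infty\)-presheaves preserves finite limits, hence Nisnevich excision squares, and obviously preserves \(\A^1\)-invariance, so the sectionwise \(BY\) already lies in \(\mathcal{H}^{\fr}(k)^{\gp}\) and no further localization is needed.

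For the \(\G_m\)-case, your invocation of \cref{thm:1} and the formula for \((\Omega^n_{\P^1} L_{\Nis}\Sigma^n_T Y_+)(X)\) is misplaced. The reconstruction theorem lives at the stable level and involves the colimit over all \(n\); it does not give you a model for a single \(\Omega_{\G_m}\Sigma_{\G_m}\), and in any case cancellation is logically independent of reconstruction in \cite{EHKSY}. The actual argument works directly with framed correspondences: one writes down an explicit framed correspondence playing the role of Voevodsky's \(\rho\)-map (the ``intersect with \(1\)'' idea you describe), and then produces the required \(\A^1\)-homotopies by hand, exactly parallel to \cite{Cancellation}. So your second and third steps are right, but the first step should be replaced by a direct description of \(\Omega_{\G_m}(\G_m\wedge Y)\) in terms of framed correspondences on \(X\times\G_m\), not via \cref{thm:1}. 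You are honest that the homotopies are the technical heart and that you are black-boxing them; that is fair, and matches what the paper itself does.
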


Let \(L_{\mathrm{Zar}}\) denote the Zariski sheafication functor and \(L_{\A^1}\) the functor defined by 
\[(L_{\A^1}F)(U) = \vert F(U \times \A^{\bullet})\vert.\]

\begin{theorem}[Strict \(\A^1\)-invariance theorem, {\cite[Theorem 3.4.11]{EHKSY}}]\label{thm:3}
Let \(k\) be a perfect field. If \(F \in \mathcal{P}_{\Sigma}(\Cor^{\fr}(\mathrm{Sm}_k))^{\gp}\), then 
\(L_{\mathrm{Zar}}L_{\A^1}F\)
is an \(\A^1\)-invariant Nisnevich sheaf.
\end{theorem}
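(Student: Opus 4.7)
The plan is to reduce the theorem to its abelian analogue — a strict \(\A^1\)-invariance theorem for presheaves of abelian groups with framed transfers (due to Garkusha, Panin, Ananyevskiy, and Neshitov in the Voevodsky model, and adapted in EHKSY) — via a Postnikov argument. Since \(F\) lies in \(\mathcal{P}_{\Sigma}(\Cor^{\fr}(\mathrm{Sm}_k))^{\gp}\), so does \(L_{\A^1}F\), and consequently each homotopy sheaf \(\pi_n L_{\A^1}F\) is naturally a presheaf of abelian groups on \(\Cor^{\fr}(\mathrm{Sm}_k)\). The grouplike hypothesis is essential here: it upgrades the commutative monoid structure on \(\pi_n\), coming from semi-additivity of \(\Cor^{\fr}(\mathrm{Sm}_k)\), to a genuine abelian group structure. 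The underlying presheaf of \(\pi_n L_{\A^1}F\) on \(\mathrm{Sm}_k\) is moreover \(\A^1\)-invariant, since \(L_{\A^1}\) acts idempotently at the level of homotopy groups.

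Next, I would invoke strict \(\A^1\)-invariance at the linear level: for a perfect field \(k\) and any \(\A^1\)-invariant presheaf of abelian groups \(G\) on \(\Cor^{\fr}(\mathrm{Sm}_k)\), the Zariski sheafification \(L_{\mathrm{Zar}}G\) coincides with the Nisnevich sheafification, and all its Nisnevich cohomology presheaves remain \(\A^1\)-invariant. Applied to \(G = \pi_n L_{\A^1}F\) for every \(n\), and assembled through the descent spectral sequence for the Postnikov tower of \(L_{\A^1}F\), this yields that \(L_{\mathrm{Zar}}L_{\A^1}F\) satisfies Nisnevich descent and has \(\A^1\)-invariant homotopy sheaves — equivalently, \(L_{\mathrm{Zar}}L_{\A^1}F \in \mathcal{H}^{\fr}(k)\), as desired.

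The hard part will be the strict homotopy invariance input for linear framed presheaves. Voevodsky's analogue for finite correspondences rests on an analysis of sections over semilocal essentially smooth \(k\)-schemes combined with a moving lemma for relative zero-cycles. The framed version demands a substantially more intricate moving lemma for framed correspondences landing in \(\A^1\), and perfectness of \(k\) enters crucially through the realization of Nisnevich henselizations at smooth points as cofiltered limits of smooth \(k\)-schemes, which keeps framed transfers under control in the limit. The Postnikov reassembly in the second step is comparatively formal, modulo convergence of the descent spectral sequence, which holds because smooth \(k\)-schemes have finite Nisnevich cohomological dimension.
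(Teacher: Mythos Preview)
The paper does not actually prove this theorem: it merely cites \cite[Theorem 3.4.11]{EHKSY} and remarks that \Cref{thm:2} and \Cref{thm:3} are analogues of Voevodsky's results for finite correspondences with similar proofs. Your outline is an accurate summary of the argument in that reference: one reduces to the abelian case via the Postnikov tower of \(L_{\A^1}F\), applies the strict \(\A^1\)-invariance theorem of Garkusha--Panin (and Ananyevskiy--Neshitov) for \(\A^1\)-invariant presheaves of abelian groups with framed transfers, and reassembles using the descent spectral sequence, whose convergence is guaranteed by the finite Nisnevich cohomological dimension of smooth \(k\)-schemes. One small terminological slip: in your first paragraph you write ``homotopy sheaf'' where you mean homotopy \emph{presheaf}, since no sheafification has yet been applied at that stage.
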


\Cref{thm:2} and \Cref{thm:3} are analogues of Voevodsky's results for finite correspondences (\cite{Cancellation} and \cite[§3.2]{StrictInvariance}) and the proofs are similar. For \Cref{thm:2}, Bachmann has a paper (\cite{Tom}) where he proves a cancellation theorem for finite flat correspondences, which applies to these framed correspondences as well. 

The next result is an analogue of \Cref{thm:Q1} for framed motivic spectra and thus answers \cref{question2}. 
\begin{corollary}[Motivic recognition principle, {\cite[Theorem 3.5.16]{EHKSY}}]\label{cor:Q1}
Let \(k\) be a perfect field. There is an equivalence
\[\mathcal{H}^{\fr}(k)^{\gp} \simeq \SH(k)^{\veff},\]
where \(\SH(k)^{\veff} \subset \SH(k)\) denotes the full subcategory generated under colimits by suspension spectra \(\Sigma^{\infty}_+ X.\)
\end{corollary}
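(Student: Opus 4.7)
The plan is to exhibit the restriction of $\Sigma^{\infty}_{\fr}$ to the grouplike subcategory as an equivalence onto $\SH(k)^{\veff}\subset\SH(k)$, where I first use the Reconstruction Theorem (\Cref{thm:1}) to identify $\SH^{\fr}(k)$ with $\SH(k)$. Since the target is stable, $\Omega^{\infty}_{\fr}$ automatically lands in grouplike objects, producing an adjoint pair
\[\Sigma^{\infty}_{\fr}\from\mathcal{H}^{\fr}(k)^{\gp}\rightleftarrows\SH(k)\from\Omega^{\infty}_{\fr}.\]

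The core technical step is fully faithfulness of $\Sigma^{\infty}_{\fr}|_{\gp}$. I would use the description of $\SH^{\fr}(k)$ as $T$-spectra in $\mathcal{H}^{\fr}(k)$ with $T=S^1\wedge\G_m$, for which mapping spaces are given by the filtered colimit
\[\Map_{\SH^{\fr}(k)}(\Sigma^{\infty}_{\fr}F,\Sigma^{\infty}_{\fr}G)\simeq \underset{n}{\operatorname{colim}}\,\Map_{\mathcal{H}^{\fr}(k)}(T^{\wedge n}\wedge F,\, T^{\wedge n}\wedge G).\]
When $F$ and $G$ are grouplike, the Cancellation Theorem (\Cref{thm:2}) makes every transition map an equivalence, because smashing with either $S^1$ or $\G_m$ is fully faithful on the grouplike subcategory; the colimit then collapses to $\Map_{\mathcal{H}^{\fr}(k)^{\gp}}(F,G)$, which is exactly what fully faithfulness demands.

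To identify the essential image with $\SH(k)^{\veff}$, I would use that $\Sigma^{\infty}_{\fr}$ preserves colimits and that, via the equivalence of \Cref{thm:1}, it sends the free framed presheaf on $X\in\Sm_k$ to $\Sigma^{\infty}_+X$. Since $\SH(k)^{\veff}$ is by definition the colimit-closure of the $\Sigma^{\infty}_+X$ and since $\mathcal{H}^{\fr}(k)^{\gp}$ is generated under colimits by the group completions of these free objects, the essential image of $\Sigma^{\infty}_{\fr}|_{\gp}$ is exactly $\SH(k)^{\veff}$. Combined with fully faithfulness this gives the equivalence.

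The hard part lies in justifying the filtered-colimit formula for mapping spaces in the $T$-stabilization, which requires group completion, $\A^1$-invariance, and Nisnevich descent to interact coherently with the loop functors $\Omega_{S^1}$ and $\Omega_{\G_m}$ on the grouplike subcategory. This is precisely what the Strict $\A^1$-invariance Theorem (\Cref{thm:3}) provides: the naive Zariski–$\A^1$ localization of a grouplike framed presheaf is already an $\A^1$-invariant Nisnevich sheaf, so that $\SH^{\fr}(k)$ is the honest $T$-stabilization of $\mathcal{H}^{\fr}(k)$ and the above mapping-space formula holds without further correction. With \Cref{thm:1}, \Cref{thm:2}, and \Cref{thm:3} all in hand, the remaining assembly is formal.
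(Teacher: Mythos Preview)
Your proposal is correct and follows the same two-step strategy as the paper: the Cancellation Theorem (\Cref{thm:2}) yields fully faithfulness of $\Sigma^{\infty}_{\fr}$ on grouplike objects, and the Reconstruction Theorem (\Cref{thm:1}) identifies the essential image with $\SH(k)^{\veff}$. Your appeal to \Cref{thm:3} is superfluous here---since $\mathcal{H}^{\fr}(k)$ is by definition the category of $\A^1$-invariant Nisnevich sheaves, $\SH^{\fr}(k)$ is already its honest $T$-stabilization and the mapping-space colimit formula is purely formal---though invoking it does no harm.
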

\begin{proof}
\Cref{thm:2} implies that the infinite suspension functor 
\[\Sigma^{\infty}_{\fr} \from \mathcal{H}^{\fr}(k)^{\gp} \to \SH^{\fr}(k)\]
is fully faithful. Using \Cref{thm:1} we identify its essential image as \(\SH(k)^{\veff}.\)
\end{proof}

The main application of \Cref{thm:1} is that the framed suspension functor \(\Sigma^{\infty}_{\fr}\) becomes a ``machine'' for producing motivic spectra:
Let \(F \in \mathcal{P}_{\Sigma}(\Cor^{\fr}(\Sm))\) be a presheaf with framed transfers. Then 
\[\Sigma^{\infty}_{\fr} F \in \SH^{\fr}(S),\]
which by \Cref{thm:1} can by identified with an object of \(\SH(S).\) This allows one to build motivic spectra from presheaves with framed transfers.

In the special case of \(S= \Spec k\), where \(k\) is a perfect field, we can use \cref{cor:Q1} to compute the infinite loop space of such a spectrum by using the equivalence
\[\Omega^{\infty}_{\fr}\Sigma^{\infty}_{\fr}F \simeq L_{\mathrm{Zar}}L_{\A^1} F^{\gp}.\]

We have the following consequences of this in action:
\begin{example}
Applying the infinite framed suspension functor \(\Sigma^{\infty}_{\fr}\) to the right side of this diagram one obtains the left side:
\begin{center}
    \begin{tikzcd}
    \mathbbm{1} \arrow[d] & & & F Syn^{\fr}\arrow[d]
    \\
    \textnormal{MSL} \arrow[r] \arrow[d] & \textnormal{MGL} \arrow[d] && F Syn^{\text{or}} \arrow[hook,d] \arrow[r] & F Syn \arrow[hook,d]\\
    ko \arrow[r] \arrow[equal,d] & kgl \arrow[equal,d] && F Gor^{\text{or}} \arrow[r] \arrow[d] & F Flat \arrow[d]\\
    ko \arrow[r] \arrow[d] & kgl \arrow[d] && Vect^{\text{sym}} \arrow[r] \arrow[d] & Vect \arrow[d]\\
    H\widetilde{\Z} \arrow[r]& H\mathbb{Z} && \underline{GW} \arrow[r] & \underline\Z,
    \end{tikzcd}
\end{center}
In more details:
\begin{itemize}
    \item \(FSyn^{\fr}\) is the stack of framed finite syntomic schemes, which is the unit object in \(\mathcal{P}_{\Sigma}(\Cor^{\fr}(\mathrm{Sm}_k))\). The claim that \(\Sigma^\infty_{\fr} FSyn^{\fr}\simeq \mathbbm{1}\) is therefore tautological.
    \item \(FSyn\) is the stack of finite syntomic schemes and \(FSyn^{\text{or}}\) is the stack of finite syntomic schemes with an orientation (i.e. a trivialization of the dualizing sheaf). That these give \(\textnormal{MGL}\) and \(\textnormal{MSL}\) over a general base is proved in \cite[Theorems 3.4.1, 3.4.3]{EHKSYCobordism}.
    \item \(Vect\) is the stack of vector bundles and \(FFlat\) is the stack of finite flat schemes. That both give the effective \(K\)-theory spectrum \(kgl\) over a field is proved in \cite[Corollary 5.2, Theorem 5.4]{HJNTY}.
    \item \(Vect^{\text{sym}}\) is the stack of vector bundles with a non-degenerate symmetric bilinear form, and \(FGor^{\text{or}}\) is the stack of oriented finite Gorenstein (dualizing sheaf is a line bundle) schemes. That both give the effective hermitian \(K\)-theory spectrum \(ko\) over a field of characteristic not 2 is proved in \cite[Proposition 7.7, Theorem 7.12]{HJNY}.
    \item \(\underline\Z\) is the constant sheaf with fiber \(\Z\). It is proved in \cite[Theorem 21]{Hoy} that \(\Sigma^\infty_{\fr} \underline\Z\simeq H\mathbb{Z}\) over any Dedekind domain, where \(H\mathbb{Z}\) is the motivic spectrum representing Bloch--Levine motivic cohomology.
    \item Finally, \(\underline{GW}\) is the sheaf of unramified Grothendieck--Witt rings, i.e., the Zariski sheafification of the usual Grothendieck--Witt groups. It is proved in \cite[Theorem 8.3]{HJNY} that \(\Sigma^\infty_{\fr} \underline{GW}\simeq H\widetilde{\Z}\) over any Dedekind domain in which 2 is invertible.
\end{itemize}
Here is one nice application of this picture: all morphisms on the right-hand side are easily seen to be \(E_\infty\)-ring maps, hence so are all morphisms on the left-hand side. 
\end{example}

\section{Algebraic cobordism}\label{sec:Cobordism}
The topic for this section is algebraic cobordism, following \cite{EHKSYCobordism}. Specifically, we will apply the above theory to provide a framed model for algebraic cobordism which allows for more friendly computations. 

\begin{quote}
    \textit{In particular the question about the convergence of the divisibility spectral sequence for algebraic cobordism would benefit greatly from knowing that elements of algebraic cobordism can be represented by some kind of geometric objects.}
    
    -V. Voevodsky, 2000, My view of the current state of motivic homotopy theory p.11,
\end{quote}

We begin by considering the moduli stack \(PQSm\) of proper quasi-smooth derived schemes. It is natural to study this stack in connection with algebraic cobordism, because these are precisely the schemes that give rise to algebraic cobordism classes. Namely, a proper quasi-smooth derived scheme of virtual dimension \(-n\) has a cobordism class in \(\textnormal{MGL}^{2n, n}\) by, for instance, using the transfers from \cref{sec:MotivicTransfers}. The \(\A^1\)-localization
\[L_{\A^1} PQSm := \vert PQSm({-} \times \A^\bullet) \vert\]
can be described informally as follows. Points in \(L_{\A^1} PQSm\) are schemes, and a path between two points is a scheme over \(\A^1\), which may be viewed as a cobordism from the fiber over \(0\) to the fiber over \(1\) (see \cref{fig:Cobordism}). A path between two paths is then a cobordism of cobordisms, etc. The analogous construction in topology, with \(\R^1\) in place of \(\A^1\), is one way to define the cobordism spaces that appear for example in the cobordism hypothesis. One may thus interpret the functor \(L_{\A^1}\) as forming the cobordism spaces of moduli stacks of schemes.

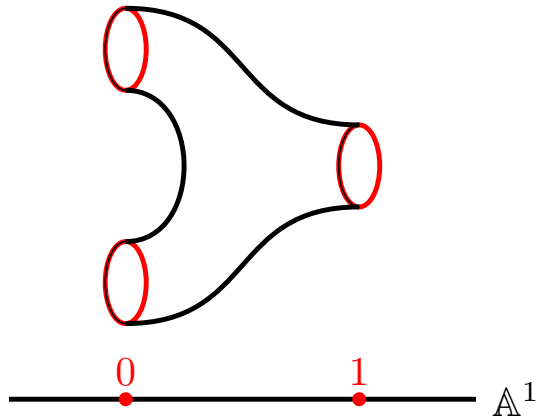
\begin{figure}[h]
    \centering
    \adjustbox{width=0.6\textwidth}{%
    \begin{tikzpicture}[tqft/view from=incoming]
    \pic[tqft/pair of pants, draw,at={(3,0)}, rotate=270, 
    cobordism edge/.style={draw,very thick, black},
    incoming boundary component 1/.style={draw,very thick, red, rotate=270},
    outgoing boundary component 1/.style={draw,very thick, red, rotate=270}, 
    outgoing boundary component 2/.style={draw,very thick, red, rotate=270}];
    
    \draw[black, very thick] (0,-2) -- (4,-2) node[anchor=west]{\(\mathbb{A}^1\)};
    \filldraw [red] (1,-2) circle (1.5pt) node[anchor=south]{0};
    \filldraw [red] (3,-2) circle (1.5pt) node[anchor=south]{1};
    \end{tikzpicture}}
    \caption{A cobordism from two circles to one.}
    \label{fig:Cobordism}
\end{figure}

Recall the forgetful functor
\[\Cor^{\fr}(\Sm) \longrightarrow \Cor^{\fsyn}(\Sm)\]
from the \(\infty\)-category of framed correspondences of smooth schemes to the 2-category of finite syntomic correspondences of smooth schemes, given by forgetting the framing and using that the morphism in the left span is finite syntomic. 

Let \(FQSm^n\) denote the moduli stack of finite quasi-smooth derived \(S\)-schemes of relative dimension \(-n\). This is obviously a presheaf on \(\Cor^{\fsyn}(\Sm)\) and hence also on \(\Cor^{\fr}(\Sm)\), using the above forgetful functor.

\begin{theorem}\label{thm:A1}
For every scheme \(S\) and every integer \(n\geq 0\) there is an equivalence
\[\Sigma^n_T \textnormal{MGL} \simeq \Sigma^{\infty}_{\fr} FQSm^n\]
in \(\SH(S) \simeq \SH^{\fr}(S)\).
\end{theorem}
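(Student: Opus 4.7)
The plan is to propagate the $n = 0$ case---namely $\Sigma^{\infty}_{\fr} FSyn \simeq \textnormal{MGL}$ from \cite[Theorem 3.4.1]{EHKSYCobordism}, which was already recalled in the example concluding Section~\ref{sec:Recognition}---up the $T$-suspension tower of $\textnormal{MGL}$. Since $FQSm^0$ coincides with the classical stack $FSyn$ (a finite quasi-smooth derived scheme of relative dimension $0$ has vanishing cotangent complex and is therefore finite syntomic in the classical sense), the base case is in hand. The statement is manifestly compatible with base change $S' \to S$, so one may work over a universal base and pass to stalks as convenient.

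First, I would construct the natural comparison map $\Sigma^{\infty}_{\fr} FQSm^n \to \Sigma^n_T \textnormal{MGL}$ in $\SH^{\fr}(S) \simeq \SH(S)$. Unwinding the definitions, an $X$-point of the source on a smooth $X$ is a span $X \leftarrow Z \to *$ with $Z \to X$ framed finite syntomic, together with a finite quasi-smooth derived $Z$-scheme $W \to Z$ of virtual relative dimension $-n$. The composite $W \to X$ is proper and quasi-smooth with virtual tangent complex of rank $-n$, and applying the derived lci transfer of Khan (mentioned in the remark following Theorem~\ref{thm:Transfers}) lands one in $(\Sigma^n_T \textnormal{MGL})(X)$. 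A check of coherence shows this is functorial in framed correspondences and hence assembles into the desired map.

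Second, to show this comparison is an equivalence over a perfect field, I would invoke the motivic recognition principle (Corollary~\ref{cor:Q1}): after grouplike completion, it suffices to verify a Nisnevich-$\A^1$-local equivalence of presheaves with framed transfers. The essential geometric input is that, locally, any finite quasi-smooth derived scheme of virtual dimension $-n$ arises as the derived zero locus of a section of a rank-$n$ vector bundle on a finite syntomic base. This exhibits $FQSm^n$ motivically as a twist of $FSyn$ by $\mathrm{Vect}_n$, matching the standard Thom-space presentation of $\Sigma^n_T \textnormal{MGL}$ built from tautological bundles on Grassmannians. Globalization from perfect fields to arbitrary $S$ then proceeds via the base-change arguments already employed in \cite{EHKSYCobordism} for the case $n = 0$.

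The main obstacle is handling the derived structure honestly: for $n > 0$ the stack $FQSm^n$ is genuinely nonclassical, so one must verify that the derived lci transfer interacts coherently with the framed composition law, and that the ``Thom twist'' in the recognition principle lifts from $\mathrm{Vect}_n$ to the moduli of sections cutting out quasi-smooth loci. This is precisely the technical ground on which Khan's derived six-functor formalism is indispensable, and represents the bulk of the work beyond the syntomic case.
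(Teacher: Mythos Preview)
Your outline takes a genuinely different route from the paper, and while parts of it are in the right spirit, the core mechanism is not the one the paper uses, and your substitute is left vague at exactly the point where the work lies.

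The paper does \emph{not} bootstrap from the case $n=0$, does \emph{not} invoke the recognition principle (Corollary~\ref{cor:Q1}), and does \emph{not} reduce to perfect fields. Instead it argues uniformly in $n$ over an arbitrary base $S$ as follows. First, one writes $\Sigma^n_T\textnormal{MGL}\simeq \operatorname*{colim}_{X,\xi} Th_X(\xi)$, the colimit running over smooth $X$ and $\xi\in K(X)$ of rank $n$ (this is the Bachmann--Hoyois presentation of Thom spectra). Second, one applies the framed Thom spectrum theorem (Theorem~\ref{thm:FramedThomSpectra}), $Th_X(\xi)\simeq\Sigma^\infty_{\fr}(X,\xi)^{\fr}$, to rewrite this as $\Sigma^\infty_{\fr}\operatorname*{colim}_{X,\xi}(X,\xi)^{\fr}$. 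Third, one shows that the forgetful map $\operatorname*{colim}_{X,\xi}(X,\xi)^{\fr}\to FQSm^n$ is an equivalence of presheaves by computing the fiber over a fixed span $U\leftarrow Z$: this fiber is $\operatorname{fib}_{T_f}\bigl(\operatorname*{colim}_{X,g}K(X)\to K(Z)\bigr)$, which is contractible because the map inside is exactly the left Kan extension comparison, an equivalence by the Bhatt--Lurie result (Proposition~\ref{prop:BL}). No six-functor transfer, no recognition principle, no field assumption.

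Your proposal, by contrast, builds a comparison map via Khan's derived lci transfer and then checks it over perfect fields using the recognition principle, appealing to an unproved ``essential geometric input'' that $FQSm^n$ is motivically a $\mathrm{Vect}_n$-twist of $FSyn$. That last sentence is precisely the content of the theorem, and you have not supplied an argument for it; the paper's argument for this step is the Bhatt--Lurie left Kan extension computation, which you do not mention. Moreover, the globalization from perfect fields to arbitrary $S$ is not automatic for statements of this shape, so even granting the perfect-field case your last step is a genuine gap. The paper's approach is both simpler and more general: the two key lemmas you are missing are Theorem~\ref{thm:FramedThomSpectra} and Proposition~\ref{prop:BL}.
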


The main ingredient of the proof is to understand the framed models for Thom spectra of virtual vector bundles coming from \Cref{thm:FramedThomSpectra}. We therefore suspend the proof of this theorem until we have established a proof of the result.


\begin{remark} There is an equivalence
\[FQSm^0 \cong FSyn.\]
\end{remark}

The next theorem is a specialization of \Cref{thm:A1} to the case of perfect fields. 
\begin{theorem}\label{thm:A2}
If \(k\) is a perfect field and \(n\geq 0\), then 
\[\Omega^{\infty}_T\Sigma^{n}_T \textnormal{MGL} \simeq L_{\mathrm{Zar}}L_{\A^1} (FQSm^n)^{\gp}.\]
In particular,
\[\Omega^{\infty}_T \textnormal{MGL} \simeq L_{\mathrm{Zar}}L_{\A^1} FSyn^{\gp}.\]
If \(n \geq 1\), then
\[\Omega^{\infty} \Sigma^{n}_T \textnormal{MGL} \simeq L_{\mathrm{Nis}}L_{\A^1} FQSm^n.\]
\end{theorem}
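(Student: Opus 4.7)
The plan is to deduce all three equivalences from \Cref{thm:A1} combined with the formula $\Omega^{\infty}_{\fr}\Sigma^{\infty}_{\fr}F \simeq L_{\mathrm{Zar}}L_{\A^1}F^{\gp}$ recorded just before the statement, which itself follows from the reconstruction theorem (\Cref{thm:1}) and the recognition principle (\Cref{cor:Q1}).

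First I would apply $\Omega^{\infty}_T$ to both sides of \Cref{thm:A1}. Under the reconstruction equivalence $\SH(k) \simeq \SH^{\fr}(k)$, the functor $\Omega^{\infty}_T$ identifies with $\Omega^{\infty}_{\fr}$, so
\[
\Omega^{\infty}_T\Sigma^n_T\textnormal{MGL} \simeq \Omega^{\infty}_{\fr}\Sigma^{\infty}_{\fr}FQSm^n \simeq L_{\mathrm{Zar}}L_{\A^1}(FQSm^n)^{\gp},
\]
which is the first equivalence. Setting $n = 0$ and invoking the identification $FQSm^0 \cong FSyn$ from the preceding remark then yields the second.

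For the third equivalence I would show that when $n \geq 1$ the Zariski sheafification can be upgraded to a Nisnevich sheafification and the group completion can be dropped. The Zariski-to-Nisnevich passage is free on grouplike objects, since \Cref{thm:3} guarantees that $L_{\mathrm{Zar}}L_{\A^1}$ of any grouplike framed presheaf is already an $\A^1$-invariant Nisnevich sheaf. To omit the group completion I would argue that $L_{\mathrm{Nis}}L_{\A^1}FQSm^n$ is already grouplike for $n \geq 1$; combined with the fact that group completion commutes with the left-adjoint construction $L_{\mathrm{Nis}}L_{\A^1}$, this would give $L_{\mathrm{Nis}}L_{\A^1}FQSm^n \simeq L_{\mathrm{Nis}}L_{\A^1}(FQSm^n)^{\gp}$. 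The grouplike property is suggested by the first equivalence, which identifies the group completion of $L_{\mathrm{Nis}}L_{\A^1}FQSm^n$ with $\Omega^{\infty}\Sigma^n_T\textnormal{MGL}$, and since $T^n = S^n \wedge \G_m^{\wedge n}$ involves $n$ simplicial suspensions, the target is sectionwise an $n$-fold $S^1$-classifying space for $n \geq 1$, hence connected.

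The main obstacle is precisely this grouplike claim: upgrading the connectivity of $\Omega^{\infty}\Sigma^n_T\textnormal{MGL}$ in the $S^1$-direction into the statement that $L_{\mathrm{Nis}}L_{\A^1}FQSm^n$ itself (not merely its group completion) admits inverses in $\pi_0$. I expect this to require either a geometric argument on the moduli stack $FQSm^n$ producing $\A^1$-homotopies that witness inverses for classes of negative virtual dimension, or a formal argument reducing grouplike-ness of the source to connectivity of the target via the comparison map. The categorical framework of framed transfers, together with \Cref{thm:3} and the recognition principle \Cref{cor:Q1}, is well-suited to making this comparison rigorous, and the failure of such grouplike-ness for $n = 0$ (where $\pi_0(FSyn)$ genuinely contains the non-invertible class $[\Spec k]$) explains why group completion is retained in the first two equivalences.
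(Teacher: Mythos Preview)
Your approach coincides with the paper's: \Cref{thm:A2} is presented there as a specialization of \Cref{thm:A1} via the formula \(\Omega^{\infty}_{\fr}\Sigma^{\infty}_{\fr}F \simeq L_{\mathrm{Zar}}L_{\A^1}F^{\gp}\), and the remark immediately following the statement explains the third equivalence exactly as you do---\(L_{\mathrm{Nis}}L_{\A^1}FQSm^n\) is connected for \(n\geq 1\), hence already grouplike, so the group completion may be dropped. The paper, being an expository survey, does not justify this connectedness either; the obstacle you flag is real but is treated as input from the cited source rather than proved here, so your caution is appropriate and your outline is complete by the paper's own standards.
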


\begin{remark}
The last point of \Cref{thm:A2} means that one can trade the group completion by instead taking Nisnevich sheafification. More precisely, the Nisnevich sheaf \(L_{\mathrm{Nis}}L_{\A^1} FQSm^n\) is connected, hence is already grouplike, so we do not need to take its group completion. 

Moreover, a posteriori, we know by Morel's connectivity theorem that 
\(\Omega^{\infty} \Sigma^{n}_T \textnormal{MGL}\) is \((n-1)\)-connected.
\end{remark}

\begin{remark}
We state \Cref{thm:A1} and \Cref{thm:A2} for \(\textnormal{MGL}\), but there are analogue statement for all Thom spectra that one would like to think about, e.g. \(\textnormal{MSL}\), \(\textnormal{MSp}\), \(\mathbb{S}\), etc.  
\end{remark}

\begin{theorem}\label{thm:B1}
For every scheme \(S\) there is an equivalence of symmetric monoidal \(\infty\)-categories
\[\textnormal{Mod}_{\textnormal{MGL}}(\SH(S)) \simeq \SH^{\fsyn}(S),\]
where \(\SH^{\fsyn}(S)\) is the analogue of \(\SH^{\fr}(S)\) using \(\Cor^{\fsyn}(\Sm)\) instead of \(\Cor^{\fr}(\Sm)\).
\end{theorem}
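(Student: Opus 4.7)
The plan is to exhibit $\SH^{\fsyn}(S)$ as modules over a canonical $E_\infty$-algebra in $\SH^{\fr}(S)\simeq\SH(S)$ and identify that algebra with $\textnormal{MGL}$ via the $n=0$ case of \Cref{thm:A1}. Since the forgetful functor $\Cor^{\fr}(\Sm)\to\Cor^{\fsyn}(\Sm)$ is symmetric monoidal (both sides carry the fibre product over $S$ as tensor), restriction along it induces, after Nisnevich sheafification, $\A^1$-localization, and $T$-stabilization, a lax symmetric monoidal right adjoint $u\from\SH^{\fsyn}(S)\to\SH^{\fr}(S)$ whose left adjoint $\text{free}$ is strong symmetric monoidal. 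The unit of $\mathcal P_\Sigma(\Cor^{\fsyn}(\Sm))$ is the representable on $S$, which sends $X\in\Sm$ to the $\infty$-groupoid of finite syntomic $X$-schemes, i.e., to $FSyn(X)$. Hence $u(\mathbbm 1_{\fsyn})\simeq\Sigma^\infty_{\fr}FSyn$ in $\SH^{\fr}(S)$, and \Cref{thm:A1} with $n=0$ identifies this with $\textnormal{MGL}$. A key preliminary task is to verify that the $E_\infty$-algebra structure on $u(\mathbbm 1_{\fsyn})$ inherited from the lax monoidality of $u$ coincides with the standard $E_\infty$-ring structure on $\textnormal{MGL}$.

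Next, I apply the $\infty$-categorical Barr--Beck theorem to the adjunction $\text{free}\dashv u$. The functor $u$ is conservative: if $u(E)=0$, then passing to the underlying motivic spectrum via the reconstruction equivalence kills all mapping spaces out of $\Sigma^\infty_{\fsyn,+}X$ for $X\in\Sm$, forcing $E=0$. Moreover $u$ preserves all colimits, since restriction along a small functor between presentable $\infty$-categories is both a left and a right adjoint, and because the forgetful map of correspondence categories is compatible with Nisnevich covers, with $\A^1$-homotopies, and with $T$-stabilization. Barr--Beck therefore yields $\SH^{\fsyn}(S)\simeq\text{Mod}_T(\SH^{\fr}(S))$ for the monad $T=u\circ\text{free}$. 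The projection formula for the symmetric monoidal adjunction $\text{free}\dashv u$, verifiable on representable generators in this presentable stable setting, then gives $T(F)\simeq F\otimes u(\mathbbm 1_{\fsyn})\simeq F\otimes\textnormal{MGL}$. To promote the result to an equivalence of symmetric monoidal $\infty$-categories, I invoke the symmetric monoidal refinement of monadicity (Lurie, HA 4.8.5.11), obtaining $\SH^{\fsyn}(S)\simeq\text{Mod}_{\textnormal{MGL}}(\SH(S))$ as symmetric monoidal $\infty$-categories.

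The main obstacle lies in the first step, namely promoting the underlying equivalence $u(\mathbbm 1_{\fsyn})\simeq\textnormal{MGL}$ to an equivalence of $E_\infty$-algebras. This requires tracing the multiplicative structure through the proof of \Cref{thm:A1}: on the $FSyn$ side, the product comes from fibre product of finite syntomic schemes over $S$ (yielding the Pontryagin product on $\Sigma^\infty_{\fr}FSyn$), while on the $\textnormal{MGL}$ side it is the Thom-class multiplication induced by direct sum of vector bundles on Grassmannians. Matching these two ring structures is precisely the multiplicative refinement of \Cref{thm:A1} established in \cite{EHKSYCobordism}; combined with the formal monadicity argument above, it yields \Cref{thm:B1}.
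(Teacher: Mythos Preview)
The paper does not actually prove \Cref{thm:B1}; it is stated without argument (as is typical in this survey), with the implicit reference to \cite{EHKSYCobordism}. So there is no proof in the paper to compare against.

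That said, your strategy is the correct one and is essentially what is carried out in \cite[\S4]{EHKSYCobordism}: show that the forgetful functor $u\colon\SH^{\fsyn}(S)\to\SH^{\fr}(S)\simeq\SH(S)$ is monadic via Barr--Beck (conservativity and colimit preservation), identify the monad with $(-)\otimes u(\mathbbm 1_{\fsyn})$ via the projection formula, and then identify $u(\mathbbm 1_{\fsyn})\simeq\Sigma^\infty_{\fr}FSyn\simeq\textnormal{MGL}$ as $E_\infty$-rings using the $n=0$ case of \Cref{thm:A1}. Your identification of the multiplicative comparison as the one nonformal step is accurate; that is precisely what the cited reference supplies. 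The only places where your write-up is a bit quick are the compatibility of $u$ with the motivic localizations and the verification of the projection formula, but these are standard and not genuine gaps.
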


\begin{theorem}\label{thm:B2}
If \(k\) is a perfect field, then
\[\textnormal{Mod}_{\textnormal{MGL}}(\SH(S)^{\veff}) \simeq \mathcal{H}^{\fsyn}(k)^{\gp}.\]
\end{theorem}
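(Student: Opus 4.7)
The plan is to prove this as the \(\textnormal{MGL}\)-module analogue of \Cref{cor:Q1}. Concretely, I would first establish the finite syntomic recognition principle
\[\mathcal{H}^{\fsyn}(k)^{\gp}\simeq \SH^{\fsyn}(k)^{\veff},\]
where \(\SH^{\fsyn}(k)^{\veff}\subseteq \SH^{\fsyn}(k)\) is the colimit-closure of the image of \(\Sigma^{\infty}_{\fsyn}\Sigma^{\infty}_+\colon \Sm_k\to \SH^{\fsyn}(k)\), and then transport this across the symmetric monoidal equivalence \(\SH^{\fsyn}(k)\simeq\textnormal{Mod}_{\textnormal{MGL}}(\SH(k))\) of \Cref{thm:B1}.

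For the fsyn recognition principle, I would follow the same template as the proof of \Cref{cor:Q1}, using fsyn analogues of \Cref{thm:2} and \Cref{thm:3}. Cancellation in the fsyn setting is a special case of Bachmann's cancellation theorem for finite flat correspondences mentioned after \Cref{thm:2}. For strict \(\A^1\)-invariance, one option is to replay the proof of \Cref{thm:3} directly in the syntomic setting; another is to reduce to the framed case via the forgetful functor \(\Cor^{\fr}(\Sm_k)\to \Cor^{\fsyn}(\Sm_k)\), using that a grouplike presheaf with fsyn transfers carries framed transfers and then checking that the fsyn transfers survive the Zariski--\(\A^1\)-localisation supplied by \Cref{thm:3}. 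Together these give full faithfulness of \(\Sigma^{\infty}_{\fsyn}\) on grouplike objects, with essential image \(\SH^{\fsyn}(k)^{\veff}\) by construction.

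To match the fsyn recognition principle with \(\textnormal{Mod}_{\textnormal{MGL}}(\SH(k)^{\veff})\), note that \(\SH(k)^{\veff}\) is generated under colimits by the \(\Sigma^{\infty}_+X\) for \(X\in\Sm_k\), so \(\textnormal{Mod}_{\textnormal{MGL}}(\SH(k)^{\veff})\) is generated by the free modules \(\textnormal{MGL}\wedge\Sigma^{\infty}_+X\). Under the equivalence of \Cref{thm:B1}, the unit \(\textnormal{MGL}\) on the right corresponds to the unit of \(\SH^{\fsyn}(k)\)---consistent with \Cref{thm:A1} at \(n=0\), which identifies \(\textnormal{MGL}\) with \(\Sigma^{\infty}_{\fr}FSyn\)---so the generators \(\textnormal{MGL}\wedge\Sigma^{\infty}_+X\) correspond precisely to the generators \(\Sigma^{\infty}_{\fsyn}\Sigma^{\infty}_+X\) of \(\SH^{\fsyn}(k)^{\veff}\), and the two effective subcategories are identified. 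Composing the two equivalences gives the statement.

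The main obstacle is the fsyn strict \(\A^1\)-invariance step: the proof of \Cref{thm:3} exploits specific features of framings (in particular trivialisations of the virtual tangent bundle), and while the reduction via the forgetful functor to \(\Cor^{\fr}\) is the natural plan, one must verify with care that the finer fsyn transfer structure, rather than merely the underlying framed structure, is preserved by the Zariski sheafification and \(\A^1\)-localisation functors. The cancellation half, by contrast, is essentially handed to us by Bachmann's theorem.
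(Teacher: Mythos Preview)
Your approach is essentially the same as the paper's: the paper's one-line argument says the theorem ``is a combination of \Cref{thm:B1} with a cancellation theorem in \(\mathcal{H}^{\fsyn}(k)^{\gp}\),'' and you have correctly unpacked this and correctly identified Bachmann's result as the source of fsyn cancellation. One small correction: the ``main obstacle'' you flag---fsyn strict \(\A^1\)-invariance---is not actually needed here, since the proof of \Cref{cor:Q1} (which you are imitating) uses only cancellation (\Cref{thm:2}) and reconstruction (\Cref{thm:1}), not \Cref{thm:3}; strict \(\A^1\)-invariance enters only later, in the explicit formula \(\Omega^{\infty}_{\fr}\Sigma^{\infty}_{\fr}F \simeq L_{\mathrm{Zar}}L_{\A^1} F^{\gp}\), so you can drop that step and the attendant worry.
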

This theorem is a combination of \Cref{thm:B1} with a cancellation theorem in \(\mathcal{H}^{\fsyn}(k)^{\gp}\).

\subsection{Framed models for effective Thom spectra}\label{subsec:ThomSpec}
\begin{definition}
Consider a smooth \(S\)-scheme \(X\) and \(\xi \in K(X)_{\geq 0}\). We define a framed presheaf 
\[(X, \xi)^{\fr} \in \mathcal{P}_{\Sigma}(\Cor^{\fr}(\Sm))\] by sending a scheme \(U\) to the space of spans \((U \overset{f}{\leftarrow} Z \overset{g}{\rightarrow} X)\)
where \(f\) is finite and quasi-smooth together with \(\alpha \colon T_f \simeq -g^*(\xi) \in K(Z).\)
\end{definition}

This definition is not enough to define a presheaf on \(\Cor^{\fr}(\Sm)\); there is some work involved in making this precise which can be found in \cite[Appendix B]{EHKSYCobordism}.

\begin{remark}
If \(\text{rank}(\xi) =0,\) then \(Z\) is underived. E.g. \[(X,0)^{\fr} = \Cor_S^{\fr}(-, X).\]
\end{remark}

These presheaves turn out to be framed models for effective Thom spectra.
\begin{theorem}[{\cite[Theorem 3.2.1]{EHKSYCobordism}}]
\label{thm:FramedThomSpectra}
There is an equivalence
\[Th_X(\xi) \simeq \Sigma^{\infty}_{\fr}(X, \xi)^{\fr}\]
in \(\SH(S) \simeq \SH^{\fr}(S).\)
\end{theorem}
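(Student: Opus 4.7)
The plan is to construct a natural comparison map $\Sigma^{\infty}_{\fr}(X,\xi)^{\fr} \to Th_X(\xi)$ in $\SH(S)\simeq\SH^{\fr}(S)$ and verify it is a motivic equivalence by reducing to the case where $\xi$ is a trivial virtual bundle, which in turn follows from the Voevodsky-lemma computation used in the sketch of \Cref{thm:1}. For the comparison map, a point $(U\xleftarrow{f} Z\xrightarrow{g} X, \alpha)$ of $(X,\xi)^{\fr}(U)$ should be sent to the class in $\Omega^{\infty}Th_X(\xi)(U)$ obtained by combining (i) the Gysin/Thom-suspension transfer associated to the proper, (derived) lci morphism $f$, provided by \Cref{thm:Transfers} together with its derived extension by Khan, (ii) the trivialization $\alpha\from T_f\simeq -g^*\xi$, and (iii) the pushforward of Thom spectra along $g\from Z\to X$. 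Promoting this construction to a natural transformation of presheaves on $\Cor^{\fr}(\Sm)$, compatible with composition of framed spans, is the substantive bookkeeping carried out in \cite[Appendix B]{EHKSYCobordism}.

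To verify the resulting map is an equivalence, I would reduce to the case $\xi=n\cdot\mathcal{O}_X$ trivial of rank $n\geq 0$. Any element of $K(X)$ is Zariski-locally of the form $\mathcal{O}^a-\mathcal{O}^b$, and both sides are functorial in $\xi$ under addition of trivial summands: on the right this is $Th_X(\xi\oplus\mathcal{O}^m)\simeq\Sigma^m_T Th_X(\xi)$, while on the left the analogous statement is extracted from cancellation (\Cref{thm:2}), which ensures that the corresponding framed suspension spectra differ by the expected $\Sigma^m_T$. Combined with the Jouanolou device, which lets one pass to an affine base where trivial factors may be freely stabilized, this reduces the problem to $\xi=n$. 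In that case $Th_X(n)=\Sigma^n_T X_+$, while $(X,n)^{\fr}(U)$ consists of codimension-$n$ derived subschemes $Z\subset U$ with an $n$-tuple of defining equations and a map $Z\to X$, which is precisely the datum Voevodsky's lemma produces for $(\Omega^n_{\P^1} L_{\Nis}\Sigma^n_T X_+)(U)$; the colimit identification from \cite[Corollary 2.3.27]{EHKSY} cited in the sketch of \Cref{thm:1} then yields the equivalence.

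The main obstacle is making the presheaf $(X,\xi)^{\fr}$ and the comparison map fully coherent as objects over $\Cor^{\fr}(\Sm)$. One must simultaneously track two compatible layers of framing data — the framing of the finite quasi-smooth $f$ underlying the morphism in $\Cor^{\fr}$ and the framing $\alpha$ of its virtual tangent bundle imposed by $\xi$ — through composition of spans, which is genuinely delicate when $\mathrm{rank}(\xi)<0$ and $Z$ must be taken derived. Checking that the comparison is a natural transformation over $\Cor^{\fr}(\Sm)$, rather than merely over $\Sm$, further hinges on base-change compatibilities of Khan's derived transfers inside the six-functor formalism; once these are in place, the trivial-bundle case and the reduction to it are comparatively routine.
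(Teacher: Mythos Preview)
Your route differs from the paper's in both halves of the argument, and there are real gaps in the reduction step.

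\textbf{Construction of the comparison map.} The paper builds the map in the opposite direction and far more cheaply. For $\xi$ an honest vector bundle $V$, the zero section $z\from X\hookrightarrow V$ carries a canonical identification $T_z\simeq -V$, so the span $(V\xleftarrow{z} X\xrightarrow{\id} X)$ is a point of $(X,V)^{\fr}(V)$; the resulting map $V\to (X,V)^{\fr}$ is null on $V^\times$ and hence factors through $Th_X(V)^{\fr}\to (X,V)^{\fr}$. No six-functor Gysin maps are required. The passage to virtual $\xi\in K(X)_{\geq 0}$ is then purely formal via $K\simeq L_{\mathrm{Zar}}Vect^{\gp}$ and the fact that such $\xi$ are Zariski-locally honest vector bundles. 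Your construction via Khan's derived transfers is not wrong in spirit, but it front-loads exactly the coherence difficulties you flag at the end, whereas the paper sidesteps them entirely.

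\textbf{Verification of the equivalence.} The paper does not reduce to trivial $\xi$; it stays with vector bundles, reduces to the case where the base is a field, and invokes a moving lemma of Garkusha--Neshitov--Panin. Your reduction has two issues. First, your description of $(X,n)^{\fr}(U)$ as ``codimension-$n$ subschemes with an $n$-tuple of defining equations'' conflates tangential framings (a path $T_f\simeq -n$ in $K(Z)$) with equational ones; Voevodsky's lemma produces the latter, and the comparison between the two is precisely the nontrivial input from \cite{EHKSY}, stated there for $\xi=0$ rather than general $n$. Second, your appeal to cancellation (\Cref{thm:2}) does not establish $\Sigma^{\infty}_{\fr}(X,\xi\oplus\mathcal{O}^m)^{\fr}\simeq \Sigma^m_T\,\Sigma^{\infty}_{\fr}(X,\xi)^{\fr}$: cancellation asserts full faithfulness of $\Sigma_T$ on $\mathcal{H}^{\fr}(k)^{\gp}$, which does not by itself identify these two objects, and in any case is only available over a perfect field, while the theorem is asserted over an arbitrary base $S$.
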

The proof consists of two parts: first constructing a map between these two objects, and then showing that it is an equivalence.
\begin{proof}
Suppose that \(\xi\) is a vector bundle \(V\) on \(X\), and let \(V^\times\) denote the complement of the zero section inside \(V\). We define a map as follows:

Consider the diagram
\begin{center}
\begin{tikzcd}
\emptyset \arrow[d] \arrow[rr] && X \arrow[dl, "z" above left, hook'] \arrow[dr, equal] & \\
V^\times \arrow[r, hook]&V & & X,
\end{tikzcd}
\end{center}
where \(z\) is the zero section.
The canonical equivalence \(T_z \simeq -V\) defines a map 
\[V \longrightarrow (X, V)^{\fr},\]
and we get a diagram
\begin{equation}\label{eq:ThomSpec}
    \begin{tikzcd}
    (V^\times)^{\fr} \arrow[d, hook] \arrow[dr, "0"] \\
    V^{\fr} \arrow[r] \arrow[d]& (X, V)^{\fr} \\
    Th_X(V)^{\fr} \arrow[ur, dotted],
    \end{tikzcd}
\end{equation}
where the dotted arrow is the comparison map we were looking for. 

One can show that the map is a motivic equivalence by reducing to the case where the base is a field and using a moving lemma from \cite{GNP}.

The general case for virtual tangent bundles is now mostly formal. The main point is to use the identification \(K(-) \simeq L_{\mathrm{Zar}} Vect(-)^{\gp}\). The Thom spectrum functor \(Th_X\from Vect(X) \to \SH(X)\) takes the direct sum of vector bundles to the tensor product of spectra and lands in the Picard \(\infty\)-groupoid \(\text{Pic}(\SH(X))\), and thus factors uniquely through the group completion of \(Vect(X)\). 

We can then formally extend \eqref{eq:ThomSpec} to a map
\[Th_X(\xi)^{\fr} \longrightarrow (X, \xi)^{\fr},\]
which is natural in the virtual vector bundle \(\xi\) over \(X\).

To see that this map is an equivalence we use that virtual vector bundles of rank \(\geq 0\) are actual vector bundles Zariski locally.
\end{proof}

We state a result about K-theory that is needed in the proof of \Cref{thm:A1}.
\begin{proposition}[Bhatt--Lurie, {\cite[A.0.6]{EHKSYCobordism}}]\label{prop:BL}
Let \(R\) be a commutative ring. Then the \(K\)-theory functor
\[K \from CAlg^{\text{der}}_R \to \Spc\]
is left Kan extended from \(CAlg^{\text{Sm}}_R.\)
\end{proposition}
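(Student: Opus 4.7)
The plan is to reformulate the statement as preservation of sifted colimits by $K$. Recall that $CAlg^{\der}_R$ is by definition the sifted cocompletion $\mathcal{P}_{\Sigma}(\mathrm{Poly}_R)$ of the ordinary category $\mathrm{Poly}_R$ of finitely generated polynomial $R$-algebras, which is a full subcategory of $CAlg^{\Sm}_R$. By the universal property of sifted cocompletion, a functor $F\from CAlg^{\der}_R \to \Spc$ is left Kan extended from $\mathrm{Poly}_R$ if and only if $F$ preserves sifted colimits. Since every smooth $R$-algebra is a filtered colimit of finitely presented ones, and the latter are étale over polynomial $R$-algebras, a standard transitivity argument shows that for any functor already preserving filtered colimits on $CAlg^{\Sm}_R$, left Kan extension from $CAlg^{\Sm}_R$ agrees with left Kan extension from $\mathrm{Poly}_R$. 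The task therefore reduces to showing that $K$ preserves sifted colimits on $CAlg^{\der}_R$, equivalently both filtered colimits and geometric realizations.

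The filtered-colimit case is standard: perfect complexes are compact objects of the module $\infty$-category, so $\mathrm{Perf}$ and hence $K$ commute with filtered colimits in the base derived commutative ring. For geometric realizations, given a simplicial object $A_{\bullet}$ in $CAlg^{\der}_R$ with colimit $A$, the plan is to identify $\mathrm{Perf}(A)$ with the totalization of the cosimplicial $\infty$-category $\mathrm{Perf}(A_{\bullet})$ along derived extension of scalars via a descent argument, and then to observe that on this class of cosimplicial diagrams the $K$-theory functor converts the totalization back into the realization $|K(A_{\bullet})|$, yielding $K(A) \simeq |K(A_{\bullet})|$.

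The main obstacle is precisely this descent identification for perfect modules, together with the commutation of $K$ past the resulting totalization. This step leverages the animated structure of $CAlg^{\der}_R$ in an essential way, as opposed to merely the underlying connective $E_{\infty}$-ring structure: the derived commutative rings appearing in the simplicial diagram admit polynomial resolutions, and one may reduce to the case where $A_{\bullet}$ is the \v{C}ech nerve of a surjection from a polynomial $R$-algebra. In that situation the required descent can be verified by combining classical flat-descent results for perfect complexes with Quillen's computation of $K$-theory of polynomial extensions. The interplay between this descent and the behaviour of $K$ under polynomial extensions is the technical heart of Bhatt--Lurie's argument, and is what distinguishes $K$-theory from invariants that fail to be left Kan extended from smooth algebras.
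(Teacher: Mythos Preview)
Your strategy diverges sharply from the paper's, and the step you flag as ``the technical heart'' is where your argument breaks down.

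The paper does not attempt to verify directly that \(K\) preserves sifted colimits. Instead it uses the identification \(K \simeq Vect^{\gp}\) on affine (derived) schemes. Since the overcategory \(CAlg^{\Sm}_R \downarrow T\) is sifted (it has finite coproducts) and group completion preserves sifted colimits, the problem reduces to showing that the presheaf \(Vect\) is left Kan extended from smooth affines. This is then proved \emph{geometrically}: one writes \(Vect_{\leq n}\) as the colimit of the \v{C}ech nerve of the surjection \(\coprod_{k\leq n} U_k \to Vect_{\leq n}\), where \(U_k\to Gr_k\) is a Jouanolou device. Because \(Vect_{\leq n}\) is an algebraic stack with smooth affine diagonal, every term of this \v{C}ech nerve lies in \(\mathrm{Ind}(CAlg^{\Sm}_R)^{\op}\), which is exactly what it means for \(Vect\) to be a colimit of smooth affines. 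No descent theorem for \(\mathrm{Perf}\) or for \(K\)-theory is invoked.

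Your proposed route, by contrast, hinges on two assertions that are not justified and, as stated, are false. First, for an arbitrary geometric realization \(A=\lvert A_\bullet\rvert\) in \(CAlg^{\der}_R\), there is no reason for \(\mathrm{Perf}(A)\) to be the totalization of \(\mathrm{Perf}(A_\bullet)\); this fails already for ordinary rings. Reducing to the \v{C}ech nerve of a surjection \(P\twoheadrightarrow A\) with \(P\) polynomial does not help: the terms \(P^{\otimes^L_A (n+1)}\) are neither flat over \(A\) nor polynomial extensions of anything, so ``classical flat descent'' and ``Quillen's computation of \(K\)-theory of polynomial extensions'' simply do not apply to them. Second, even if one had such a limit description of \(\mathrm{Perf}(A)\), passing \(K\) through a totalization to obtain a geometric realization is a further, highly nontrivial step that you only name, not prove. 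In short, the descent input you need is not available, and the ingredients you cite do not assemble into a proof. The paper's Grassmannian argument sidesteps all of this by reducing to \(Vect\), where the classifying stacks are smooth and the \v{C}ech nerve is automatically built from smooth affines.
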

\begin{proof}
The formula for left Kan extension is
\[LKE(K \vert CAlg^{Sm}_R)(T) = \underset{S \to T}{\text{colim }} K(S),\]
where \(S \in CAlg^{Sm}_R\). The index category \(\{S \to T\}\) has finite coproducts given by tensor products, which make it sifted. 

By the identity
\[K = Vect^{\gp}\]
and the fact that the group completion commutes with sifted colimits, we are reduced to show that \(Vect(-)\) is left Kan extended from \(CAlg^{Sm}_R\).
Using that \( Vect = \underset{n}{\text{colim }} Vect_{\leq n}\) we reduce to showing that each \(Vect_{\leq n}\) is left Kan extended.
One way of showing this is by taking the Jouanolou device
\[U_n \longrightarrow Gr_n,\]
which is an affine bundle whose total space is affine. Then there is a map
\[\coprod_{k \leq n} U_k \overset{f}{\longrightarrow} Vect_{\leq n}.\]
The map \(f\) is a section-wise surjection because we are dealing with affine schemes, so that vector bundles are generated by their global sections. Hence it may be lifted to a point in the Grassmannian, which further can be lifted to the Jouanolou device since it is an affine bundle over an affine scheme.

We therefore have
\[Vect_{\leq n} \simeq \text{colim } \check{C}_{\bullet}(f),\]
where \(\check{C}_{\bullet}(f) = \Delta^{op} \to \text{Ind}(\text{smooth affine schemes})\). The same remains true of all the self-intersections of \(\coprod_{k \leq n} U_k\), since \(Vect_{\leq n}\) is a stack with smooth  affine diagonals. Thus the iterated fiber product of \(\coprod_{k \leq n} U_k\) is still a colimit of smooth and affine schemes. Combining the colimits we thereby find that the presheaf \(Vect\) is a colimit of smooth affine \(R\)-schemes, which is precisely what it means to be left Kan extended from \(CAlg^{Sm}_R\). 
\end{proof}

Now for the proof of \Cref{thm:A1}.
\begin{proof}[Proof of \Cref{thm:A1}]
For \(n=0\) we can write
\[\textnormal{MGL} \simeq \underset{X,\xi}{\text{colim }} Th_X(\xi),\]
by \cite[Section 16]{BH},
where the colimit is taken over \(X \in \Sm\) and \(\xi \in K(X)\) with \(\text{rank}(\xi)=0\). 
Hence
\[\textnormal{MGL} \simeq \Sigma^{\infty}_{\fr} \underset{X, \xi}{\text{colim }} (X, \xi)^{\fr}\]
by \Cref{thm:1}.
Further, there is a forgetful map
\begin{center}
    \begin{tikzcd}
    \underset{X, \xi}{\text{colim }} (X, \xi)^{\fr} \arrow[d] \\
    FSyn
    \end{tikzcd}
\end{center}
by using that an object in \(\underset{X, \xi}{\text{colim }} (X, \xi)^{\fr}\) consists of a span \(U \overset{f}{\leftarrow} Z \overset{g}{\to} X\) 
which we forgetfully map to \(U \overset{f}{\leftarrow} Z.\) We claim that this map in an equivalence.

By taking the fiber one obtains a Cartesian square
\begin{center}
    \begin{tikzcd}
    \underset{X, \xi}{\text{colim }} (X, \xi)^{\fr} \arrow[d]  & \arrow[l] \arrow[d] \underset{X, \xi}{\text{colim }} \{ Z \overset{g}{\to} X,  T_f \simeq -g^*(\xi)\}\\
    FSyn & \arrow[l] \{U \overset{f}{\leftarrow} Z\}. 
    \end{tikzcd}
\end{center}
Formally rewriting we find that
\begin{align*}
\underset{X, \xi}{\text{colim }} \{ Z \overset{g}{\to} X,  T_f \simeq -g^*(\xi)\}
&\simeq
\underset{X, \xi,g}{\text{colim }} \{ T_f \simeq -g^*(\xi)\}\\
&\simeq
\underset{X, g}{\text{colim }} \text{fib}_{T_f}\left( K(X) \overset{g^*}{\rightarrow} K(Z) \right),
\end{align*}
and exchanging the fiber and colimit we get
\[\underset{X, g}{\text{colim }} \text{fib}_{T_f}\left( K(X) \overset{g^*}{\rightarrow} K(Z) \right) \simeq \text{fib}_{T_f}\left( \underset{X, g}{\text{colim }} K(X) \rightarrow K(Z) \right).\]
This we recognize as the formula for the left Kan extension, i.e.
\[\underset{X, g}{\text{colim }} K(X) \simeq LKE(K \vert \Sm)(Z). \]

It remains to show that
\[\underset{X, \xi}{\text{colim }} \{ Z \overset{g}{\to} X,  T_f \simeq -g^*(\xi)\}\]
is contractible, which we have reduced to showing that
\[\text{fib}_{T_f}\Big(LKE(K \vert \Sm)(Z) \to K(Z)\Big)\]
is contractible. 
However, it follows from \cref{prop:BL} that the map 
\[LKE(K \vert \Sm)(Z) \to K(Z)\]
is an equivalence, hence the fiber must be trivial. 

The proof for 
\(\Sigma^{n}_T \textnormal{MGL}\)
for \(n>0\) is similar, but with \(\xi \in K(X)\) such that \(\text{rank}(\xi)= n\).
\end{proof}

\begin{question}\label{OpenQuestion}
Is \Cref{thm:A1} still true if one relaxes the ``finiteness'' condition to a ``properness'' condition?
\end{question}

Note that the finiteness condition was not necessary to get a cobordism class. If one drops this condition, then \(FSyn\) becomes equal to \(PQSm^0\), which is a much larger stack. To see this one can start with any smooth proper scheme of any dimension \(n\) and find a derived scheme structure on it which is quasi-smooth and of dimension 0; we may for instance take the vanishing locus of the zero function \(n\) times.

It turns out that the naive cobordisms space \(L_{\A^1} PQSm^0\) is a group under disjoint union, which is not true for \(L_{\A^1}FSyn\). There should then be maps
\begin{equation}
    \begin{tikzcd}[column sep=huge]
    FSyn \arrow[r, hook] & PQSm^0 \arrow[r, "\text{conjecturally}"] & \Omega^{\infty}_T \textnormal{MGL},
    \end{tikzcd}
\end{equation}
where the last map uses the transfers in \(\textnormal{MGL}\)-cohomology. This would imply that 
\(\Sigma^{\infty}_{\fr} PQSm^0\)
has \(\textnormal{MGL}\) as a retract. Over perfect fields one can therefore conclude that \(\Omega^{\infty}\textnormal{MGL}\) is a direct factor of \(L_{\mathrm{Zar}} L_{\A^1} PQSm^0\). It is an open problem whether they in fact are equivalent.

\printbibliography

\noindent\textsc{Fakult\"{a}t f\"{u}r Mathematik, Universit\"{a}t Regensburg, Universit\"{a}tsstr. 31, 93040 Regensburg, Germany}\\
\textit{Email adress: }
\texttt{marc.hoyois@ur.de} \\
\textit{URL: } \url{http://www.mathematik.ur.de/hoyois/}

\vspace{5mm}

\noindent\textsc{Department of Mathematics, University of Oslo, Moltke Moes vei 35, 0851 Oslo, Norway}\\
\textit{Email adress: }
\texttt{ntmarti@math.uio.no} \\
\textit{URL: } \url{http://www.nikolaiopdan.com}

\end{document}